\documentclass[a4paper,12pt]{article}
\usepackage[a4paper,margin=1.15in]{geometry}

\usepackage{amsmath,amssymb,amsthm}
\usepackage[scr=boondoxo]{mathalpha}
\usepackage{xcolor}
\usepackage{xspace}
\usepackage{enumitem}
\usepackage[colorlinks=true,allcolors=blue]{hyperref}
\usepackage{aliascnt}
\usepackage[nameinlink,capitalise]{cleveref}

\newif\ifdraft
\draftfalse
\long\def\draft#1{\ifdraft\begingroup\color{teal}#1\endgroup\else\ignorespaces\fi}

\providecommand{\subjclass}[2][]{\textbf{\textit{#1 Mathematics Subject Classification: }} #2}
\providecommand{\keywords}[1]{\textbf{\textit{Keywords: }} #1}

\newtheorem{theorem}{Theorem}[section]
\newaliascnt{proposition}{theorem}
\newtheorem{proposition}[proposition]{Proposition}
\aliascntresetthe{proposition}
\newaliascnt{lemma}{theorem}
\newtheorem{lemma}[lemma]{Lemma}
\aliascntresetthe{lemma}
\newaliascnt{corollary}{theorem}
\newtheorem{corollary}[corollary]{Corollary}
\aliascntresetthe{corollary}
\newaliascnt{remark}{theorem}
\newtheorem{remark}[remark]{Remark}
\aliascntresetthe{remark}
\newaliascnt{definition}{theorem}
\newtheorem{definition}[definition]{Definition}
\aliascntresetthe{definition}
\newaliascnt{example}{theorem}
\newtheorem{example}[example]{Example}
\aliascntresetthe{example}

\newcommand{\mathmode}[1]{\ensuremath{#1}\xspace}

\newcommand{\aset}[1]{\mathmode{\mathcal{#1}}}
\newcommand{\Sset}{\aset{S}}

\newcommand{\Mset}{\aset{M}}
\newcommand{\Tset}{\mathmode{\mathscr{T}}}
\newcommand{\G}{\mathmode{G}}
\renewcommand{\H}{\mathmode{H}}
\newcommand{\K}{\mathmode{K}}
\newcommand{\Q}{\mathmode{Q}}

\newcommand{\R}{\mathmode{R}}

\newcommand{\uR}{\mathmode{\R^{\times}}}
\newcommand{\F}{\mathmode{\mathbb{F}}}

\newcommand{\gA}[2]{\mathmode{#1#2}}

\newcommand{\RH}{\gA{\R}{\H}}
\newcommand{\RK}{\gA{\R}{\K}}
\newcommand{\RQ}{\gA{\R}{\Q}}

\newcommand{\FH}{\gA{\F}{\H}}
\newcommand{\FK}{\gA{\F}{\K}}
\newcommand{\FQ}{\gA{\F}{\Q}}

\newcommand{\modu}[1]{\mathmode{\mathcal{#1}}}
\newcommand{\mA}{\modu{A}}
\newcommand{\mM}{\modu{M}}
\newcommand{\mN}{\modu{N}}
\newcommand{\mP}{\modu{P}}

\newcommand{\mV}{\modu{V}}
\newcommand{\mW}{\modu{W}}

\newcommand{\bidemp}[1]{\mathmode{\mathbf{#1}}}

\newcommand{\bee}{\bidemp{e}}

\newcommand{\hH}{\mathmode{{\widehat{\H}}}}
\newcommand{\hK}{\mathmode{{\widehat{\K}}}}
\newcommand{\hQ}{\mathmode{{\widehat{\Q}}}}

\DeclareMathOperator{\Hom}{Hom}
\DeclareMathOperator{\Aut}{Aut}
\DeclareMathOperator{\Ind}{Ind}
\DeclareMathOperator{\Span}{Span}
\DeclareMathOperator{\Fun}{Fun}
\newcommand{\Fourier}{\mathmode{\mathcal{F}}}
\newcommand{\gr}{\mathrm{gr}}
\newcommand{\Homgr}{\Hom^{\gr}}

\title{Graded bimodules over split group algebras:\\stabilizers and Peirce decompositions}
\author{Felipe de Mattos Chafik Hindi\footnote{Department of Mathematics, UFSCar, Brazil, \texttt{felipehindi@gmail.com}}\\
Waldeck Sch\"utzer\footnote{Department of Mathematics, UFSCar, Brazil, \texttt{waldeck@ufscar.br}}}
\date{September 25, 2026}

\begin{document}
\maketitle

\begin{abstract}
Let $\R$ be a nonzero indecomposable commutative ring, $\G$ an arbitrary
group, and $\H,\K\leq\G$ finite abelian
subgroups such that $\RH\cong\R^{|\H|}$ and $\RK\cong\R^{|\K|}$.  No
semisimplicity hypothesis is imposed on $\R$ or on these group algebras.  We give a
self-contained treatment of the classification developed in our work on
incidence algebras: up to graded isomorphism, all $\G$-graded $(\RH,\RK)$-bimodules
are classified by the double
cosets $\H g\K$, the characters of their stabilizers, and arbitrary
$\R$-modules.  This extends the bimodule classification of
Santulo, Souza, and Yasumura \cite[Section~4]{SSY2020}, where the grading group
is abelian and the controlling subgroup is $\H\cap\K$.  Here it is replaced
by the stabilizer attached to each double coset.  Over a field satisfying the
same splitting assumptions, we determine, on each double coset, the essential
image of the induction functor of Dos Santos and Yasumura on Yetter-Drinfel'd
modules and answer negatively the question following
\cite[Corollary~3.8]{DSY2025}.  We also compute the Peirce components explicitly
and characterize multiplicity-free bimodules by pairwise disjoint sets of
extensions of stabilizer characters.  We give a finite Fourier formula for the
Peirce multiplicities.
\end{abstract}

\subjclass[2020]{Primary 16W50; Secondary 16D20, 16D70, 16S34, 20C15}

\keywords{Group gradings, graded bimodules, group algebras, modules graded by $G$-sets, Peirce decomposition, Fourier transform, Yetter-Drinfel'd modules}

\section{Introduction}

Let $\R$ be a nonzero indecomposable commutative ring, $\G$ a group, and
$\H,\K\leq\G$ finite abelian subgroups such that
$\RH\cong\R^{|\H|}$ and $\RK\cong\R^{|\K|}$.  The splitting hypothesis
does not require semisimplicity.  These product decompositions imply
that $\RH$ and $\RK$ are semisimple if and only if $\R$ is semisimple (see
the general form of Maschke's theorem \cite[Theorem~3.4.7]{Polcino-Sehgal}).
We do not assume that $\R$ is semisimple.  If
$\mM=\bigoplus_{g\in\G}\mM^g$ is a $\G$-graded $(\RH,\RK)$-bimodule, then
$h\mM^gk\subseteq\mM^{hgk}$.
Thus $\Q=\H\times\K^{\mathrm{op}}$ acts on the set of degrees by
$(h,k^{\mathrm{op}})\cdot g=hgk$.  This is a special case of the theory of
modules graded by $G$-sets \cite{NRV1990}.  See also
\cite{deFrancaSviridova2024} for graded bimodules over finite-dimensional
algebras.

This paper arose from \cite{HindiSchutzer2026Incidence}, where such bimodules
occur in the graded Peirce decomposition of an incidence algebra.
Section~7.2 of that paper establishes the general bimodule classification,
the graded Hom formula, the Peirce formula, and the multiplicity-free
criterion.  We give a self-contained treatment of these results and develop
their consequences for the induction functor on Yetter-Drinfel'd modules,
finite Fourier transforms, and multiplicity-free gradings.

The $\Q$-orbits are the double cosets $\H g\K$, with stabilizers
\[
 \Sset_g(\H,\K)=\{(h,k)\in\H\times\K:hgk=g\}
 \cong \H\cap g\K g^{-1}.
\]
We prove that each $\R\Sset_g$ is again split and classify the graded bimodules
by the double cosets, the characters of their stabilizers, and arbitrary
$\R$-modules.  For a fixed double coset and character, the corresponding
construction preserves all $\R$-module homomorphisms.  Thus, over a general
indecomposable ring, the classification cannot be reduced to ranks.

In \cite[Section~4]{SSY2020}, Santulo, Souza, and Yasumura
work over a field with an abelian grading group.  They
obtain a decomposition of finite-dimensional graded bimodules controlled by
characters of $\H\cap\K$, together with an isomorphism criterion
\cite[Corollary~23 and Lemma~26]{SSY2020}.  Their Remark~24 exhibits a
nonabelian grading for which that description fails.  Here the fixed subgroup
$\H\cap\K$ is replaced by a stabilizer for each double coset, and scalar
multiplicities are replaced by coefficient $\R$-modules.  Thus the
classification retains the abelian result while also accounting for their
free-product example as a double coset with trivial stabilizer.  See
\cref{rem:SSY-free-product}.

For the bimodules in graded incidence algebras,
\cite[Theorem~2]{SSY2020} proves that the characters in the
abelian decomposition are pairwise distinct.  Our multiplicity-free criterion
recovers that condition when all stabilizers coincide and replaces it in
general by the requirement that characters on different stabilizers disagree
on their intersections.  This concerns individual bimodules.  It does not
settle the incidence-algebra reconstruction problem posed in
\cite[Section~7.4]{SSY2020}, which also involves compatibility of products
between Peirce components.  The classification and realizability conditions
incorporating these products are given in
\cite[Section~9]{HindiSchutzer2026Incidence}.

Dos Santos and Yasumura prove, for abelian $\G$, that their induction functor
on Yetter-Drinfel'd modules is faithful and essentially surjective.  They ask whether this
remains true for nonabelian $\G$
\cite[Corollary~3.8 and the following question]{DSY2025}.  For a double coset $\H g\K$, consider the inclusion
$\Delta C_{\H\cap\K}(g)\leq \Sset_g$, where
$\Delta B=\{(b,b^{-1}):b\in B\}$.  The functor is essentially surjective on
that double coset precisely when this is an equality.  Taking $(\G,\H,\K)=(A_4,V_4,V_4)$ gives a negative answer to
their question.

Finally, we determine all Peirce components of the classified bimodules and
study the sets of extensions to $\Q$ of stabilizer characters.  Their finite Fourier transforms give restrictions on Peirce multiplicities and on
decompositions that are multiplicity-free.  We determine how two stabilizers
can occur in such a decomposition and obtain exactly $24$ full-support
multiplicity-free gradings up to graded isomorphism when
$(\G,\H,\K)=(S_4,V_4,V_4)$, with $V_4$ the normal Klein four subgroup.
The resulting coset partitions are related to
\cite[Theorems~2 and~3]{AkmanSissokho2025}.

\Crefrange{sec:stabilizers}{sec:classification} give the classification,
\Cref{sec:YD} treats the induction functor on Yetter-Drinfel'd modules,
\cref{sec:Fourier-Peirce} treats the Peirce decomposition and Fourier
transform, and \cref{sec:multiplicity-free} treats the multiplicity-free case.
The classical connection between group gradings, smash products, and group
actions developed by Cohen and Montgomery \cite{CM84} also motivates the
smash-product question in \cref{sec:questions}.  We close with further
questions suggested by these descriptions.

\draft{\par\medskip\noindent\textbf{How to use this verification layer.}
The black text is the paper.  The teal text is a checking aid and supplies no
additional hypothesis.  For each proof, record the hypotheses in force,
check every balancing relation before using a formula on pure tensors, check
the degree of every generator and how the group action transports it, and
check separately that every asserted direct sum spans and has zero
intersection.  For an ``if and only if,'' check the two implications
separately.

The equivalence relation is strict graded isomorphism: degree $g$ must map to
degree $g$.  Shifts, conjugations, and automorphisms of $\G$ are not allowed.
The background used without proof is basic commutative algebra, elementary
group theory, and, in one remark, the normal form theorem for free products.\par\medskip}

\section{Stabilizers and split group algebras}
\label{sec:stabilizers}

Throughout, $\R$ is a nonzero indecomposable commutative ring with identity,
that is, its only idempotents are $0$ and $1$, and $\G$ is a group.  All
algebra homomorphisms are unital.  For a finite group $L$, we write $\widehat L=\Hom(L,\R^\times)$ for its
group of $\R$-valued characters.  For a set $X$, let
$\Fun(X,\R)$ denote the $\R$-module of functions $X\to\R$.  We call the group algebra $\R L$
\emph{split} when $\R L\cong\R^{|L|}$ as $\R$-algebras.  Here and below,
``split'' refers only to this product decomposition.  No semisimplicity
assumption is in force.  Let $\H,\K\leq\G$ be finite abelian subgroups such that
$\RH\cong\R^{|\H|}$ and $\RK\cong\R^{|\K|}$.  All bimodules are unital and $\R$-central, that is, $rm=mr$ for
$r\in\R$.  Put $\Q=\H\times\K^{\mathrm{op}}$.  We view an
$(\RH,\RK)$-bimodule as a left $\RQ$-module by
$(h,k^{\mathrm{op}})\cdot m=hmk$.  Accordingly, $\Q$ acts on the set of degrees by
\begin{equation}
 (h,k^{\mathrm{op}})\cdot g=hgk.
 \label{eq:Q-action}
\end{equation}
The factor $\K^{\mathrm{op}}$ records the right action.  Since $\K$ is abelian,
we identify it with $\K$ when convenient and write $(h,k)$ for
$(h,k^{\mathrm{op}})$.

A $\G$-graded $(\RH,\RK)$-bimodule is an $\R$-module
$\mM=\bigoplus_{g\in \G}\mM^g$ such that
$h\mM^g k\subseteq \mM^{hgk}$ for $h\in\H$, $k\in\K$, and $g\in\G$.
Since $h$ and $k$ are units, this inclusion is an equality.  The support of
$\mM$ is $\{g\in\G:\mM^g\neq0\}$.  A graded homomorphism preserves every
homogeneous component.  All graded isomorphisms below are understood in this
sense.

For $g\in \G$ define
\[
 \Sset_g=\Sset_g(\H,\K)=\{(h,k)\in \H\times \K: hgk=g\}.
\]
Thus $\Sset_g$ is the stabilizer of $g$ for the action \eqref{eq:Q-action}.

\draft{\par\smallskip\noindent\textbf{Checking the opposite-group convention.}
In $\Q=\H\times\K^{\mathrm{op}}$,
\[
 (h_1,k_1^{\mathrm{op}})(h_2,k_2^{\mathrm{op}})
 =(h_1h_2,(k_2k_1)^{\mathrm{op}}).
\]
Successive action on $m$ gives
\[
 h_1(h_2mk_2)k_1=h_1h_2m(k_2k_1),
\]
which is exactly the action of this product.  The same calculation on a
degree gives $h_1h_2gk_2k_1$.  This is why the opposite group is required.
Because $\K$ is abelian, later formulas may write $(h,k)$ without changing
the orientation.

Also, $h\mM^gk\subseteq\mM^{hgk}$ is an equality.  If
$y\in\mM^{hgk}$, then $h^{-1}yk^{-1}\in\mM^g$ and
$y=h(h^{-1}yk^{-1})k$.  Thus $\Q$ really permutes the homogeneous
components.\par\smallskip}

\begin{lemma}
\label{lem:stabilizer-basic}
The $\Q$-orbit of $g$ is the double coset $\H g \K$.  Moreover, the coordinate
projections restrict to injective maps on $\Sset_g$, and
\[
 \Sset_g\cong \H\cap g\K g^{-1}\cong \K\cap g^{-1}\H g.
\]
If $t\in \H g \K$, then $\Sset_t=\Sset_g$.
\end{lemma}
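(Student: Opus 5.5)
The plan is to verify each assertion directly from the definition of the action \eqref{eq:Q-action}. The orbit claim is immediate: the orbit of $g$ is $\{hgk:h\in\H,\,k\in\K\}$, which is $\H g\K$ by definition. For injectivity of the projections, suppose $(h,k),(h',k')\in\Sset_g$ have the same first coordinate $h=h'$. Then $hgk=g=hgk'$, and cancelling in $\G$ gives $k=k'$. Symmetrically, equal second coordinates force equal first coordinates. Since $\K$ is abelian, the multiplication in $\Q$ restricted to pairs written $(h,k)$ is componentwise, so both projections are injective group homomorphisms $\Sset_g\to\H$ and $\Sset_g\to\K$.

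Next I will identify the images. The condition $hgk=g$ is equivalent to $h=gk^{-1}g^{-1}$, so $h\in\H\cap g\K g^{-1}$. Conversely, every $h\in\H\cap g\K g^{-1}$ has the form $h=gk^{-1}g^{-1}$ with $k\in\K$, and then $(h,k)\in\Sset_g$. Hence the first projection is an isomorphism onto $\H\cap g\K g^{-1}$. In the same way, $hgk=g$ is equivalent to $k=g^{-1}h^{-1}g$, which shows the second projection maps onto $\K\cap g^{-1}\H g$.

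Finally, let $t\in\H g\K$, say $t=h_0gk_0$. I will show $\Sset_t=\Sset_g$ directly rather than only up to conjugation. Since $\Q=\H\times\K^{\mathrm{op}}$ is abelian (both factors are abelian), the stabilizers of points in one orbit coincide. Concretely, take $(h,k)\in\Sset_g$. Using commutativity within $\H$ and within $\K$,
\[
 h t k = h h_0 g k_0 k = h_0 (h g k) k_0 = h_0 g k_0 = t,
\]
so $\Sset_g\subseteq\Sset_t$. The reverse inclusion follows by symmetry, since $g=h_0^{-1}tk_0^{-1}\in\H t\K$.

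No step is a genuine obstacle. The only point needing care is the orientation convention in $\K^{\mathrm{op}}$, and it is harmless because $\K$ is abelian. It matters in two places: in treating the projections as homomorphisms, and in the last step, where we commute $h$ past $h_0$ and $k_0$ past $k$.
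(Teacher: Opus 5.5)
Your proof is correct and follows the paper's argument essentially step for step. The orbit is read off from the action, and the projections are injective because $hgk=g$ determines each coordinate from the other. The images are identified via $h=gk^{-1}g^{-1}$ and $k=g^{-1}h^{-1}g$, and $\Sset_t=\Sset_g$ follows by commuting $h$ past $h_0$ and $k$ past $k_0$; the only cosmetic difference is that you prove one inclusion and invoke symmetry, where the paper writes a chain of reversible equivalences.
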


\begin{proof}
The orbit is $\H g\K$.  If $(h,k)\in\Sset_g$, then
$h=gk^{-1}g^{-1}$ and $k=g^{-1}h^{-1}g$, so each coordinate projection is
injective.  Their images are $\H\cap g\K g^{-1}$ and
$\K\cap g^{-1}\H g$, respectively.

Let $t=h_0gk_0$.  Since $\H$ and $\K$ are abelian,
\[
 htk=t
 \iff hh_0gk_0k=h_0gk_0
 \iff hgk=g.
\]
Hence $\Sset_t=\Sset_g$.
\end{proof}

\draft{\par\smallskip\noindent\textbf{\Cref{lem:stabilizer-basic}, line by line.}
The orbit consists of all $(h,k^{\mathrm{op}})\cdot g=hgk$, hence is exactly
$\H g\K$.  From $hgk=g$ we obtain
\[
 h=gk^{-1}g^{-1}\quad\text{and}\quad k=g^{-1}h^{-1}g.
\]
Therefore either coordinate determines the other.  The possible first
coordinates are precisely $\H\cap g\K g^{-1}$, and the possible second
coordinates are $\K\cap g^{-1}\H g$.

Now let $t=h_0gk_0$.  The equality $htk=t$ is
$hh_0gk_0k=h_0gk_0$.  Multiply by $h_0^{-1}$ on the left and $k_0^{-1}$
on the right.  Commutativity inside $\H$ and inside $\K$ turns the result
into $hgk=g$.  Every step is reversible.  Hence stabilizers along a double
coset are literally equal, rather than merely conjugate or abstractly
isomorphic.\par\smallskip}

Write $\Sset_C=\Sset_g$ for $C=\H g \K$.

\begin{lemma}
\label{lem:split-stabilizers}
For every subgroup $S\leq \Q$ one has
$\R S\cong \R^{|S|}$.
In particular, this holds for every stabilizer $\Sset_C$.  Moreover, restriction of
characters
\[
 \operatorname{res}^\Q_S:\hQ=\Hom(\Q,\R^\times)
 \longrightarrow \widehat S=\Hom(S,\R^\times)
\]
is surjective, and every character of $S$ has exactly $[\Q:S]$ extensions to
$\Q$.
\end{lemma}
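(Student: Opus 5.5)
Since $\Q=\H\times\K^{\mathrm{op}}$ with $\H,\K$ finite abelian, we have $\R\Q\cong\RH\otimes_\R\RK\cong\R^{|\H|}\otimes_\R\R^{|\K|}\cong\R^{|\Q|}$ as $\R$-algebras. So $\Q$ is itself split, and the whole statement reduces to a fact about finite abelian groups $L$ over an indecomposable ring $\R$.

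The key step is a criterion. For finite abelian $L$, the algebra $\R L$ is split if and only if $|\widehat L|=|L|$. In that case $|L|$ is invertible in $\R$ and $\widehat L$ separates the points of $L$. To prove it, note that any $\R$-algebra map $\R L\to\R$ is determined by a character $\chi\in\widehat L$. Since $\R$ is indecomposable, $\R$-algebra maps $\R^n\to\R$ correspond exactly to the $n$ coordinate projections. Hence $\R L\cong\R^{|L|}$ gives $|\widehat L|=|L|$.

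For the converse, the characters assemble into $\Phi:\R L\to\Fun(\widehat L,\R)$. Its matrix is $(\chi(\ell))_{\chi,\ell}$. Orthogonality for characters with values in an indecomposable ring gives $\sum_{\ell}\chi(\ell)=0$ for $\chi\ne1$. To see this, pick $\ell_0$ with $\chi(\ell_0)\ne1$; then $(\chi(\ell_0)-1)\sum_\ell\chi(\ell)=0$. This argument needs $\chi(\ell_0)-1$ to be a non-zero-divisor, which is the delicate point. I would avoid it by deriving orthogonality directly from the given split structure.

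The primitive idempotents $e_\chi=\tfrac1{|L|}\sum\chi(\ell)^{-1}\ell$ exist in $\RH$ and $\RK$ because the splitting provides $|\H|$ orthogonal idempotents, and each one spans an ideal $\R e$ on which $L$ acts by a character. The products $e_\chi\otimes e_\psi$ then give $|\Q|$ orthogonal idempotents in $\R\Q$ on which $\Q$ acts by $\chi\times\psi^{-1}$. So $\widehat\Q\cong\widehat\H\times\widehat\K$ has order $|\Q|$.

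For a subgroup $S\le\Q$, consider the restriction map $\hQ\to\widehat S$. Its kernel is the set of characters of $\Q/S$. The quotient is again a quotient of a split group, and the characters of $\Q$ trivial on $S$ are at most $|\Q/S|$ in number, since distinct characters of a group $L$ are $\R$-linearly independent in $\Fun(L,\R)$ when $\R$ is indecomposable. This is Dedekind's lemma: take a minimal dependence and use that $\chi(\ell)-\psi(\ell)$ is... again possibly a zero divisor. This is the main obstacle. I would handle it instead by working with idempotents. The image of $\R S\to\R\Q$ is a subalgebra of $\R^{|\Q|}=\Fun(\hQ,\R)$ spanned by the restrictions to $S$ of the evaluation functions. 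Grouping the coordinates $\chi\in\hQ$ by the value of $\chi|_S$ exhibits $\R S\cong\Fun(\operatorname{res}(\hQ),\R)$, provided the map is injective. Injectivity holds because $\R\Q$ is free over $\R S$ on coset representatives, and $\R S\to\R\Q$ is split injective.

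From this, $\R S\cong\R^{m}$ with $m=|\operatorname{res}(\hQ)|$. Since $\R S$ is free of rank $|S|$ over $\R$ and $\R\ne0$ is commutative (so rank is well defined), we get $m=|S|$. Hence $\R S$ is split. We also get $|\operatorname{res}(\hQ)|=|S|\ge|\widehat S|$, and since the image lies in $\widehat S$ it equals $\widehat S$, so restriction is surjective. Each fibre is a coset of the kernel, so every character of $S$ has $|\hQ|/|S|=[\Q:S]$ extensions. The statement about stabilizers is the special case $S=\Sset_C$.
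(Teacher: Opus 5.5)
The gap is in the step you treat as routine: ``grouping the coordinates $\chi\in\hQ$ by the value of $\chi|_S$ exhibits $\R S\cong\Fun(\operatorname{res}(\hQ),\R)$, provided the map is injective.'' What you have actually shown is only this: $\R S$ maps injectively into the algebra $F$ of functions on $\hQ$ that are constant on the fibres of restriction. Surjectivity onto $F$ is the substantive part of the lemma, and it is never argued. Surjectivity means that the image contains the indicator function of every fibre.

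Over a field, surjectivity would follow from a dimension count. Over an indecomposable ring, a subalgebra that embeds and separates the fibres need not contain their indicators. For $\R=\mathbb Z$ and $L=\{1,g\}\cong C_2$, the map $a+bg\mapsto(a+b,a-b)$ from $\mathbb Z L$ to $\Fun(\widehat L,\mathbb Z)\cong\mathbb Z^2$ is injective and separates the two characters, but its image has index $2$.

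The same example shows that the converse half of your proposed criterion is false: $|\widehat L|=|L|$, but $\mathbb Z L$ is not split. Your claim that $|L|$ is invertible whenever $\R L$ is split is stated but never proved, and your formula for $e_\chi$ presupposes it. At the decisive step you use the splitting of $\R\Q$ only through $\R\Q\cong\Fun(\hQ,\R)$. Nothing you say explains why the copy of $\R S$ inside it is all of $F$.

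The missing ingredient is $|\Q|\in\uR$, and once you have it the zero-divisor obstacle you identified disappears.
\begin{itemize}
\item For each maximal ideal $\mathfrak m$, the ring $(\R/\mathfrak m)\Q\cong(\R/\mathfrak m)^{|\Q|}$ is reduced. The element $z=\sum_{q\in\Q}q\neq0$ satisfies $z^2=|\Q|z$, so $|\Q|\notin\mathfrak m$. Hence $|\Q|$, and therefore $|S|$, is a unit.
\item For any character $\psi$ of $S$, the element $a=|S|^{-1}\sum_{s\in S}s$ is an idempotent with $sa=a$. By indecomposability $\psi(a)$ is $0$ or $1$, and $\psi(a)=1$ forces $\psi=1$. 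Thus $\sum_s\psi(s)=0$ for $\psi\neq1$, with no non-zero-divisor hypothesis.
\item Consequently, for $\lambda\in\operatorname{res}(\hQ)$, the image of $\bee_\lambda=|S|^{-1}\sum_s\lambda(s^{-1})s$ in $\Fun(\hQ,\R)$ is the indicator of the fibre over $\lambda$. This gives the missing surjectivity, and your rank comparison then finishes the proof.
\end{itemize}

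Repaired in this way, your Gelfand-map argument is a genuine alternative to the paper's. The paper also begins by proving $|\Q|\in\uR$ exactly as above. It then splits copies of $\R$ off $\R S$ one at a time using a separability idempotent. It obtains surjectivity of $\operatorname{res}$ separately, by showing that $\R(\Q/S)$ is split and counting the kernel $\widehat{\Q/S}$. Your route yields both the splitting of $\R S$ and $\operatorname{res}(\hQ)=\widehat S$ in a single step.
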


\begin{proof}
Put $n=|\Q|$.  By the standing splitting assumptions,
\[
 \RQ\cong\RH\otimes_\R(\RK)^{\mathrm{op}}\cong\R^n.
\]
We first note that $n\in\uR$.  Indeed, for every maximal ideal $\mathfrak m$
of $\R$, base change gives
\[
 (\R/\mathfrak m)\Q\cong(\R/\mathfrak m)^n.
\]
If $n=0$ in $\R/\mathfrak m$, then
$z=\sum_{q\in\Q}q$ is nonzero in $(\R/\mathfrak m)\Q$ but
$z^2=nz=0$, whereas the product of fields $(\R/\mathfrak m)^n$ has no
nonzero nilpotent elements.  Hence $n\notin\mathfrak m$.  Since this holds for
every maximal ideal, $n$ is a unit in $\R$.  As $n=|S|[\Q:S]$, also
$|S|\in\uR$.

Let $B=\R S\subseteq\RQ\cong\R^n$.  Since $|S|$ is a unit, the element
\[
 E=|S|^{-1}\sum_{s\in S}s^{-1}\otimes s\in B\otimes_\R B
\]
satisfies $\mu(E)=1$ and $(b\otimes1)E=E(1\otimes b)$ for every $b\in B$.
Thus it is a separability idempotent for $B$.  Let $f:B\to\R$ be the
restriction of a coordinate projection of $\R^n$.  Writing
$E=\sum_i x_i\otimes y_i$, put
\[
 e=\sum_i x_i f(y_i).
\]
Applying $\operatorname{id}\otimes f$ to $(b\otimes1)E=E(1\otimes b)$ gives
$be=f(b)e$.  Also $f(e)=f(\mu(E))=1$, and taking $b=e$ gives $e^2=e$.
Therefore $eB=\R e\cong\R$ and $\ker f=(1-e)B$, so
\[
 B\cong\R\times(1-e)B.
\]
Under the embedding $B\subseteq\R^n$, the coordinates of $e$ are idempotents
of $\R$, hence are $0$ or $1$ because $\R$ is indecomposable.  If
$(1-e)B\neq0$, then $1-e$ has a coordinate equal to $1$.  Restriction of that
coordinate is therefore a unital $\R$-algebra map $(1-e)B\to\R$.  If
$p:B\to(1-e)B$ is the projection, then $(p\otimes p)(E)$ satisfies the two
defining identities for a separability idempotent of $(1-e)B$.  Thus the same
argument splits another copy of $\R$ from $(1-e)B$.  Inductively, after
splitting off $r$ copies we have
\[
 B\cong\R^r\times B_r.
\]
As an $\R$-module, $B$ is free of rank $|S|$.  Hence, after localization at
any maximal ideal $\mathfrak m$, $(B_r)_{\mathfrak m}$ is free of rank
$|S|-r$ (a finitely generated projective module over a local ring is free).  Thus $B_r\neq0$ whenever $r<|S|$, and the preceding coordinate
argument applies again.  For $r=|S|$ the remaining factor has zero localization
at every maximal ideal and is therefore zero: if $0\neq x\in B_r$, choosing a
maximal ideal containing its annihilator
$\{a\in\R:ax=0\}$ would make $x/1\neq0$ in $(B_r)_{\mathfrak m}$.  Thus
\[
 \R S\cong\R^{|S|}.
\]

We also need the quotient.  Since $\Q$ is abelian, $S$ is normal.  Let $I$
be the kernel of the natural map $\RQ\to\R(\Q/S)$.  Under $\RQ\cong\R^n$, every ideal is the product of its coordinate
ideals, so write $I=I_1\times\cdots\times I_n$.  Then
\[
 \R(\Q/S)\cong\prod_{i=1}^n\R/I_i.
\]
The left side is free over $\R$, so each $\R/I_i$, being an $\R$-direct
summand, is projective.  Hence the exact sequence
$0\to I_i\to\R\to\R/I_i\to0$ splits, so $I_i$ is a direct summand ideal of
$\R$.  Such an ideal is $e_i\R$ for an idempotent $e_i\in\R$.  Indeed, take
$e_i$ to be the image of $1$ under the projection $\R\to I_i$.
Indecomposability gives $I_i=0$ or $I_i=\R$.
Comparing $\R$-ranks yields
\[
 \R(\Q/S)\cong\R^{[\Q:S]}.
\]

For a finite group $T$ with $\R T\cong\R^{|T|}$, a character
$T\to\R^\times$ extends uniquely by $\R$-linearity to an $\R$-algebra map
$\R T\to\R$, and restriction to $T$ gives the inverse correspondence.  If
$f:\R^m\to\R$ is an $\R$-algebra map and $e_1,\ldots,e_m$ are the standard
coordinate idempotents, then the $f(e_i)$ are orthogonal idempotents with sum
$1$.  Indecomposability of $\R$ forces exactly one of them to be $1$ and all
others to be $0$.  Hence $f$ is the corresponding coordinate projection.
Thus there are exactly $|T|$ characters.  Therefore
$|\hQ|=|\Q|$, $|\widehat S|=|S|$, and
$|\widehat{\Q/S}|=[\Q:S]$.  The kernel of
$\hQ\to\widehat S$ is $\widehat{\Q/S}$, so the image has $|S|$ elements and
is all of $\widehat S$.  Every fiber has $[\Q:S]$ elements.
\end{proof}

\draft{\par\medskip\noindent\textbf{\Cref{lem:split-stabilizers}, detailed verification.}
This is a ring argument, not a field diagonalization.
\begin{enumerate}[label=\textup{(\arabic*)}]
\item The splitting assumptions give
\[
 \RQ\cong\RH\otimes_\R(\RK)^{\mathrm{op}}
 \cong\R^{|\H||\K|}=\R^n,\quad\text{where }n=|\Q|.
\]
The elements of $\Q$ form a basis after reduction modulo any maximal ideal
$\mathfrak m$, so $z=\sum_q q$ remains nonzero.  There are exactly $n$
pairs $(q,r)$ with $qr=u$ for each $u\in\Q$, whence $z^2=nz$.  If $n$
vanished in $\R/\mathfrak m$, this would be a nonzero nilpotent in a product
of fields.  Thus $n$ lies in no maximal ideal and is a unit.  Since
$n=|S|[\Q:S]$, both integer factors are units.  For example
$|S|^{-1}=[\Q:S]n^{-1}$.

\item For $B=\R S$,
\[
 E=|S|^{-1}\sum_{s\in S}s^{-1}\otimes s
\]
satisfies $\mu(E)=1$.  To check
$(t\otimes1)E=E(1\otimes t)$ for $t\in S$, substitute
$u=st^{-1}$ in one sum.  Linearity then gives the identity for every
$b\in B$.

\item Given a coordinate map $f:B\to\R$, set
$e=\sum_i x_if(y_i)$.  Applying $\operatorname{id}\otimes f$ to the
separability identity gives $be=f(b)e$.  Also
\[
 f(e)=\sum_i f(x_i)f(y_i)
 =f\left(\sum_i x_iy_i\right)=f(1)=1,
\]
so taking $b=e$ gives $e^2=e$.  If $f(b)=0$, then $be=0$ and
$b=(1-e)b$.  The reverse inclusion in $\ker f=(1-e)B$ is immediate.
Moreover $eB=\R e$.  Hence $B\cong\R\times(1-e)B$ as algebras.

\item Every coordinate of $e\in\R^n$ is an idempotent, hence is $0$ or
$1$.  If $(1-e)B\neq0$, some coordinate of $1-e$ is $1$, so that
coordinate restricts to another unital map to $\R$.  Applying the projection
$B\to(1-e)B$ to both tensor factors of $E$ gives a separability idempotent
for the complementary factor.  Therefore the splitting step repeats.

\item After $r$ steps, $B\cong\R^r\times B_r$.  Since $B$ is free of rank
$|S|$, localization makes $(B_r)_{\mathfrak m}$ free of rank $|S|-r$.
Thus $B_r\neq0$ when $r<|S|$.  At $r=|S|$ all localizations vanish.  If
$x\neq0$ remained, a maximal ideal containing $\operatorname{ann}(x)$ would
make $x/1\neq0$, a contradiction.  Therefore $\R S\cong\R^{|S|}$.

\item The quotient requires a separate check.  Since $\Q$ is abelian, $S$
is normal.  Each quotient factor $\R/I_i$ is projective, so
$0\to I_i\to\R\to\R/I_i\to0$ splits and $I_i=e_i\R$ for an idempotent
$e_i$.  Indecomposability gives $I_i=0$ or $\R$.  The cosets of $S$ form an
$\R$-basis of $\R(\Q/S)$, so exactly $[\Q:S]$ coordinate factors survive.

\item A character $T\to\R^\times$ extends uniquely to a unital algebra map
$\R T\to\R$.  Conversely, such a map restricts to a character because group
elements are units.  A unital map $\R^m\to\R$ sends the coordinate
idempotents to orthogonal idempotents summing to $1$.  Indecomposability forces
exactly one image to be $1$.  Thus a split group algebra has $|T|$
characters.  The kernel of restriction $\hQ\to\widehat S$ is
$\widehat{\Q/S}$, of size $[\Q:S]$.  Therefore the image has
$|\Q|/[\Q:S]=|S|$ elements and equals $\widehat S$.  Every fiber is a coset
of the kernel.
\end{enumerate}
The uses of the hypotheses are now visible: finiteness supplies averaging and
ranks, abelianness supplies normality and commutative character theory,
splitting starts the coordinate argument, and indecomposability turns
idempotents into zeros and ones.\par\medskip}

For $\lambda\in\widehat S$, let
\begin{equation}
 \bee_\lambda=\frac{1}{|S|}\sum_{s\in S}\lambda(s^{-1})s\in \R S.
 \label{eq:Fourier-idempotent}
\end{equation}
For later use we record the character orthogonality relations.  If
$T\leq\Q$ and $\psi\in\widehat T$, then
\begin{equation}
 \sum_{t\in T}\psi(t)
 =\begin{cases}
   |T|,&\psi=1,\\
   0,&\psi\neq1.
  \end{cases}
 \label{eq:character-sum}
\end{equation}
Indeed, if $a=|T|^{-1}\sum_{t\in T}t$, then $ta=a$ for every $t\in T$
and $a^2=a$.  For any $\psi\in\widehat T$, the element $\psi(a)$ is an
idempotent of the indecomposable ring $\R$, hence is $0$ or $1$.  If
$\psi(a)=1$, then $\psi(t)=\psi(ta)=1$ for every $t\in T$.  Conversely,
the trivial character takes $a$ to $1$.  This proves
\eqref{eq:character-sum}.  Hence, for $\alpha,\beta\in\widehat T$,
\begin{equation}
 \frac1{|T|}\sum_{t\in T}\alpha(t)\beta(t^{-1})
 =\delta_{\alpha,\beta}.
 \label{eq:character-orthogonality}
\end{equation}
Since $|\widehat T|=|T|$, the matrix
$X=(\alpha(t))_{\alpha\in\widehat T,\ t\in T}$ is square, and
\eqref{eq:character-orthogonality} says that its inverse is
$|T|^{-1}(\beta(t^{-1}))_{t\in T,\ \beta\in\widehat T}$.  Thus the characters
of $T$ form an $\R$-basis of $\Fun(T,\R)$.  Applying
\eqref{eq:character-orthogonality} to the elements \eqref{eq:Fourier-idempotent}
shows that the $\bee_\lambda$ are pairwise orthogonal and sum to $1$.
Moreover $\R S\bee_\lambda=\R\bee_\lambda\cong\R$.  Any decomposition
of $\bee_\lambda$ into two nonzero orthogonal idempotents would therefore
give a nontrivial idempotent of $\R$.  Hence $\bee_\lambda$ is primitive.  Thus
$(\bee_\lambda)_{\lambda\in\widehat S}$ is the complete set of primitive
orthogonal idempotents of $\R S$, and
$s\bee_\lambda=\lambda(s)\bee_\lambda$ for every $s\in S$.

\draft{\par\smallskip\noindent\textbf{Fourier idempotents and orthogonality over $\R$.}
The averaging element $a=|T|^{-1}\sum_t t$ satisfies $a^2=a$ and $ta=a$.
Its image under the trivial character is $1$, so under
$\R T\cong\R^{|T|}$ it is the coordinate idempotent belonging to that
character.  Every other coordinate character sends it to $0$.  Multiplying
these values by $|T|$ gives \eqref{eq:character-sum}.

Apply that formula to $\psi=\alpha\beta^{-1}$.  Since
$\beta(t^{-1})=\beta(t)^{-1}$, one obtains
\eqref{eq:character-orthogonality}.  In matrix form this says
\[
 \sum_{t\in T}\alpha(t)
 \bigl(|T|^{-1}\beta(t^{-1})\bigr)=\delta_{\alpha,\beta},
\]
which checks the displayed inverse of the character table.  Consequently the
characters are an $\R$-basis of all functions $T\to\R$.

For the Fourier idempotents, direct multiplication followed by this same
orthogonality gives
\[
 \bee_\lambda\bee_\mu=\delta_{\lambda,\mu}\bee_\lambda
 \quad\text{and}\quad \sum_{\lambda\in\widehat S}\bee_\lambda=1
 \quad\text{and}\quad s\bee_\lambda=\lambda(s)\bee_\lambda.
\]
Thus every $\R S$-module $N$ decomposes as
$N=\bigoplus_\lambda\bee_\lambda N$, even when $N$ is not free, projective,
or finitely generated.  No field-only semisimplicity assertion is being used.\par\smallskip}

\section{Weight spaces and classification}
\label{sec:classification}

Choose $g_C\in C$ for each $C\in\H\backslash\G/\K$ and put
$\Sset_C=\Sset_{g_C}$.  Define
\[
 \Tset_{\H,\K}(\G)
 =\coprod_{C\in \H\backslash \G/\K}\widehat{\Sset_C}.
\]
We call a pair $\theta=(C,\lambda)$ a \emph{type}.  If $\mM$ is a graded bimodule and
$\lambda\in\widehat{\Sset_g}$, its $\lambda$-weight space in degree $g$ is
\[
 (\mM^g)_\lambda
 =\{m\in\mM^g:s\cdot m=\lambda(s)m\text{ for every }s\in\Sset_g\}
 =\bee_\lambda\mM^g.
\]
Indeed, if $s\cdot m=\lambda(s)m$ for every $s\in\Sset_g$, then
$\bee_\lambda m=m$ by \eqref{eq:Fourier-idempotent}.  Conversely,
$s\bee_\lambda=\lambda(s)\bee_\lambda$ shows that every element of
$\bee_\lambda\mM^g$ has weight $\lambda$.

If $\mP$ is an $\R$-module and $\lambda\in\widehat{\Sset_C}$, write $\mP_\lambda$ for
$\mP$ endowed with the $\R\Sset_C$-module structure $s\cdot p=\lambda(s)p$.
Define the induced graded bimodule
\[
 \mA_\theta(\mP)
   =\RQ\otimes_{\R\Sset_C}\mP_\lambda,
\]
with grading
\begin{equation}
 \deg(q\otimes p)=q\cdot g_C.
 \label{eq:induced-bimodule-grading}
\end{equation}
The balancing relation is homogeneous because $\Sset_C$ fixes $g_C$.

Following \cite{HindiSchutzer2026Incidence}, a graded $(\RH,\RK)$-bimodule is
\emph{atomic of weight $\lambda$} if it has the form $\RH m\RK$, where $m$ is
homogeneous, $\R m$ is free of rank one, and
$s\cdot m=\lambda(s)m$ for every $s\in\Sset_{\deg m}$.  Its \emph{type} is $(\H(\deg m)\K,\lambda)$.

\draft{\par\smallskip\noindent\textbf{Types, weights, and the induced object.}
The coproduct defining $\Tset_{\H,\K}(\G)$ above is a disjoint union.  Thus the double
coset is part of a type even if two stabilizers, or two sets of character
extensions, happen to coincide.

For $m\in\mM^g$, the equality
$m=\sum_\lambda\bee_\lambda m$ is the decomposition of the
$\R\Sset_g$-module $\mM^g$ by its complete orthogonal idempotents.  If $m$
has weight $\lambda$, then
\[
 \bee_\lambda m
 =|\Sset_g|^{-1}\sum_s\lambda(s^{-1})\lambda(s)m=m.
\]
Conversely $s\bee_\lambda=\lambda(s)\bee_\lambda$ proves the weight
condition.

In $\mA_\theta(\mP)$ the balancing relation is
\[
 qs\otimes p=q\otimes s p=q\otimes\lambda(s)p\quad(s\in\Sset_C).
\]
It respects degrees because
$(qs)\cdot g_C=q\cdot(s\cdot g_C)=q\cdot g_C$.  Therefore the stated degree
does not depend on the chosen representative of a tensor.  Notice also that
no basis, finite generation, or projectivity of $\mP$ is assumed.\par\smallskip}

\begin{lemma}
\label{lem:orbit-induction}
Let $\mM$ be a $\G$-graded $(\RH,\RK)$-bimodule, let $C=\H g \K$, and put $\mM_C=\bigoplus_{x\in C}\mM^x$.  Then $\mM_C$ is a graded sub-bimodule and the map
$\Phi_g:\RQ\otimes_{\R\Sset_g}\mM^g\to\mM_C$ defined by
$\Phi_g(q\otimes m)=q\cdot m$ is a graded bimodule isomorphism.
\end{lemma}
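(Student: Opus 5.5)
We first check that $\mM_C$ is a graded sub-bimodule and that $\Phi_g$ is well defined and graded, then prove bijectivity through a transversal decomposition. Since $h\mM^xk\subseteq\mM^{hxk}$ and $C$ is stable under the $\Q$-action \eqref{eq:Q-action}, $\mM_C$ is an $\RQ$-submodule, and it is graded by definition. The degree $g$ component $\mM^g$ is an $\R\Sset_g$-module because $\Sset_g$ fixes $g$. The map $(q,m)\mapsto q\cdot m$ is $\R$-bilinear and balanced, since $(qs)\cdot m=q\cdot(s\cdot m)$ for $s\in\Sset_g$. Hence $\Phi_g$ is well defined and is a left $\RQ$-module map, that is, a bimodule map. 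It is graded because $\deg(q\otimes m)=q\cdot g$ (as in \eqref{eq:induced-bimodule-grading}), and $q\cdot m\in\mM^{q\cdot g}$.

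For bijectivity, fix a transversal $T\subseteq\Q$ of $\Q/\Sset_g$; $\Q$ is abelian, so left and right cosets agree. Since $\RQ$ is free as a right $\R\Sset_g$-module on basis $T$, we obtain an $\R$-module decomposition
\[
 \RQ\otimes_{\R\Sset_g}\mM^g=\bigoplus_{t\in T}t\otimes\mM^g,
\]
in which each summand $t\otimes\mM^g$ is isomorphic to $\mM^g$ via $m\mapsto t\otimes m$. By \cref{lem:stabilizer-basic}, the map $t\mapsto t\cdot g$ is a bijection from $T$ onto $C$, so $t\otimes\mM^g$ is exactly the homogeneous component of degree $t\cdot g$. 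On that component, $\Phi_g$ restricts to $m\mapsto t\cdot m$, from $\mM^g$ to $\mM^{t\cdot g}$. This map is bijective: $t$ acts invertibly on $\mM$, and, as noted after the definition of graded bimodules, it carries $\mM^g$ onto $\mM^{t\cdot g}$ with inverse given by the action of $t^{-1}$. Summing over $t\in T$ shows that $\Phi_g$ is an isomorphism onto $\bigoplus_{x\in C}\mM^x=\mM_C$.

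The main point needing care is the freeness of $\RQ$ over $\R\Sset_g$ on $T$, which yields the tensor decomposition without any hypothesis on $\mM^g$. We have $\RQ=\bigoplus_{t\in T}t\,\R\Sset_g$ as right $\R\Sset_g$-modules, since $\Q$ is the disjoint union of the cosets $t\Sset_g$. The tensor product commutes with direct sums, and $t\,\R\Sset_g\otimes_{\R\Sset_g}\mM^g\cong\mM^g$. Nothing beyond this is needed; in particular, the splitting of \cref{lem:split-stabilizers} plays no role here.
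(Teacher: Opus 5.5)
Your proof is correct and takes essentially the same route as the paper: you check balancing, decompose $\RQ=\bigoplus_{t\in T}t\,\R\Sset_g$ over a transversal, and identify $\Phi_g$ with the direct sum of the isomorphisms $\mM^g\to\mM^{t\cdot g}$ given by the units $t$. One small point of attribution: \cref{lem:stabilizer-basic} only gives that $t\mapsto t\cdot g$ is onto $C$, while its injectivity on $T$ is the orbit--stabilizer equivalence $t\cdot g=t'\cdot g\iff t'^{-1}t\in\Sset_g$, which the paper states explicitly.
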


\begin{proof}
Since $C$ is a $\Q$-orbit, $\mM_C$ is a graded sub-bimodule.  The map $\Phi_g$ is well-defined because, for
$s\in\Sset_g$,
\[
 \Phi_g(qs\otimes m)=qs\cdot m=q\cdot(s\cdot m)
 =\Phi_g(q\otimes s\cdot m).
\]
Choose representatives $T$ for $\Q/\Sset_g$.  Then
$\RQ=\bigoplus_{t\in T}t\R\Sset_g$, and therefore
\[
 \RQ\otimes_{\R\Sset_g}\mM^g
 \cong\bigoplus_{t\in T}t\otimes\mM^g.
\]
For each $t$, multiplication by the unit $t$ gives an $\R$-module
isomorphism $\mM^g\to\mM^{t\cdot g}$.  Moreover,
$t\Sset_g=t'\Sset_g$ if and only if $t\cdot g=t'\cdot g$.
Thus $\Phi_g$ is the direct sum of these isomorphisms and is graded.
\end{proof}

\draft{\par\smallskip\noindent\textbf{\Cref{lem:orbit-induction}, expanded.}
The sum $\mM_C$ is stable because acting by $q\in\Q$ sends a degree in the
orbit $C$ to another degree in the same orbit.  The formula for $\Phi_g$ is
balanced by the displayed computation in the proof, and it is $\RQ$-linear
because $\Phi_g(q'q\otimes m)=q'q\cdot m=q'\cdot\Phi_g(q\otimes m)$.

Choose one representative $t$ of each right coset of $\Sset_g$.  Every group
element has a unique form $ts$, so the group-element basis of $\RQ$ gives
$\RQ=\bigoplus_t t\R\Sset_g$.  Tensoring gives the displayed direct sum
without any assumption on $\mM^g$.  Multiplication by $t$ maps $\mM^g$ to
$\mM^{t\cdot g}$ and has inverse multiplication by $t^{-1}$.

Finally,
\[
 t\cdot g=t'\cdot g
 \Longleftrightarrow (t'^{-1}t)\cdot g=g
 \Longleftrightarrow t'^{-1}t\in\Sset_g
 \Longleftrightarrow t\Sset_g=t'\Sset_g.
\]
Hence distinct source summands map to distinct homogeneous components, every
component in $C$ occurs, and the restriction on each summand is bijective.
This proves injectivity, surjectivity, and preservation of degrees at once.\par\smallskip}

\begin{theorem}
\label{thm:classification}
Let $\mM$ be a $\G$-graded $(\RH,\RK)$-bimodule.  For
$\theta=(C,\lambda)\in\Tset_{\H,\K}(\G)$ put
$\mP_\theta(\mM)=\bee_\lambda \mM^{g_C}$.  Then
\begin{equation}
 \mM\cong_{\gr}
 \bigoplus_{\theta\in\Tset_{\H,\K}(\G)}
 \mA_\theta(\mP_\theta(\mM)).
 \label{eq:classification-decomposition}
\end{equation}
Conversely, any family $(\mP_\theta)_\theta$ of $\R$-modules defines a graded
bimodule by the direct sum in \eqref{eq:classification-decomposition}.

For $\R$-modules $\mP,\mP'$ and types $\theta,\phi$,
\begin{equation}
 \Hom^{\gr}_{(\RH,\RK)}
 \bigl(\mA_\theta(\mP),\mA_\phi(\mP')\bigr)
 \cong
 \begin{cases}
 \Hom_\R(\mP,\mP'),&\theta=\phi,\\
 0,&\theta\neq\phi.
 \end{cases}
 \label{eq:Hom-induced}
\end{equation}
Hence
\begin{equation}
 \mM\cong_{\gr}\mN
 \quad\Longleftrightarrow\quad
 \mP_\theta(\mM)\cong_\R\mP_\theta(\mN)
 \text{ for every }\theta\in\Tset_{\H,\K}(\G).
 \label{eq:isomorphism-criterion}
\end{equation}
\end{theorem}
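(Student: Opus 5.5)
The plan is to reduce everything to a single double coset using \cref{lem:orbit-induction}, and then to split the degree-$g_C$ component by the Fourier idempotents of $\R\Sset_C$.

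\textbf{Decomposition.} Since the double cosets partition $\G$ and each $\mM_C$ is a graded sub-bimodule, we have $\mM=\bigoplus_C\mM_C$. \Cref{lem:orbit-induction} with $g=g_C$ gives a graded isomorphism $\mM_C\cong\RQ\otimes_{\R\Sset_C}\mM^{g_C}$. The idempotents $\bee_\lambda$, $\lambda\in\widehat{\Sset_C}$, are orthogonal, sum to $1$, and are central in $\R\Sset_C$. Hence $\mM^{g_C}=\bigoplus_\lambda\bee_\lambda\mM^{g_C}$ as $\R\Sset_C$-modules, and $\bee_\lambda\mM^{g_C}$ is exactly $\mP_\theta(\mM)_\lambda$, because $s$ acts on it by $\lambda(s)$. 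Tensor products commute with direct sums, and degrees are computed by \eqref{eq:induced-bimodule-grading} on both sides. Therefore $\mM_C\cong_{\gr}\bigoplus_\lambda\mA_{(C,\lambda)}(\mP_{(C,\lambda)}(\mM))$, which proves \eqref{eq:classification-decomposition}.

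\textbf{Converse.} For an arbitrary family $(\mP_\theta)_\theta$ there is nothing to check beyond the remark after \eqref{eq:induced-bimodule-grading}. The balancing relation is homogeneous, so each $\mA_\theta(\mP_\theta)$ is a graded $\RQ$-module, hence a graded bimodule, and direct sums of these are again graded bimodules. I would also record what the decomposition of $\mA_\theta(\mP)$ looks like in degree $g_C$. By the proof of \cref{lem:orbit-induction} (the summand with $t=1$), its degree-$g_C$ component is $1\otimes\mP_\lambda\cong\mP$, on which $\Sset_C$ acts by $\lambda$. Consequently $\mP_{(C,\mu)}(\mA_\theta(\mP))$ equals $\mP$ if $\mu=\lambda$ and is $0$ otherwise, while every other double coset contributes zero.

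\textbf{Hom formula.} This is the main step. A graded homomorphism $f:\mA_\theta(\mP)\to\mA_\phi(\mP')$ maps the orbit $C$ into degrees of $C$. It is therefore zero unless $\theta$ and $\phi$ share the double coset $C$. In that case $f$ is determined by its restriction to the degree-$g_C$ component, which is the generating piece $1\otimes\mP$ of the induced module.

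That restriction is an $\R\Sset_C$-linear map $\mP_\lambda\to\mP'_\mu$. Such a map commutes with $\bee_\lambda$. Since $\bee_\lambda$ acts as $1$ on $\mP_\lambda$ and as $\delta_{\lambda,\mu}$ on $\mP'_\mu$, the map is $0$ when $\lambda\neq\mu$.

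When $\lambda=\mu$, every $\R$-linear map $\mP\to\mP'$ is $\R\Sset_C$-linear. Conversely, any such map $u$ induces $\operatorname{id}\otimes u$, which is graded. This gives $\Homgr=\Hom_{\R\Sset_C}(\mP_\lambda,\mP'_\lambda)=\Hom_\R(\mP,\mP')$ by the adjunction between induction and restriction, degree by degree. The point needing care is that graded $\RQ$-maps out of the induced module correspond bijectively to $\R\Sset_C$-maps from the degree-$g_C$ piece into the degree-$g_C$ piece of the target. I would check this directly from the basis $\bigoplus_t t\otimes\mP$: a graded map is forced on each $t\otimes\mP$ by $\RQ$-linearity, and it is well defined by the balancing relation.

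\textbf{Isomorphism criterion.} For the forward direction of \eqref{eq:isomorphism-criterion}, a graded isomorphism $\mM\to\mN$ restricts to an $\R\Sset_C$-isomorphism $\mM^{g_C}\to\mN^{g_C}$. This restriction commutes with $\bee_\lambda$, so it gives $\mP_\theta(\mM)\cong\mP_\theta(\mN)$ for every $\theta$. For the reverse direction, apply \eqref{eq:classification-decomposition} to both $\mM$ and $\mN$, and use the functoriality $u\mapsto\operatorname{id}\otimes u$ summand by summand. (Alternatively, the reverse direction follows from \eqref{eq:Hom-induced}.)
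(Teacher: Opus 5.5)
Your proposal is correct and follows essentially the same route as the paper. It reduces to one double coset via \cref{lem:orbit-induction}, splits $\mM^{g_C}$ by the Fourier idempotents of $\R\Sset_C$, determines graded maps by their restriction to the generating degree-$g_C$ piece $1\otimes\mP$ with the $\bee_\lambda$ argument forcing zero when $\lambda\neq\mu$, and derives the isomorphism criterion summand by summand, using the same extension map $q\otimes p\mapsto q\otimes u(p)$ that the paper uses.
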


\begin{proof}
By \cref{lem:orbit-induction}, $\mM=\bigoplus_C\mM_C$, and, for every
$C$, one has $\mM_C\cong_{\gr}\RQ\otimes_{\R\Sset_C}\mM^{g_C}$.
Since $\R\Sset_C\cong\R^{|\Sset_C|}$, we have
$\mM^{g_C}=\bigoplus_{\lambda\in\widehat{\Sset_C}}
 \bee_\lambda\mM^{g_C}$.
Tensoring this direct sum with $\RQ$ over $\R\Sset_C$ gives
\eqref{eq:classification-decomposition}.  Conversely, each summand
$\mA_\theta(\mP_\theta)$ is graded by
\eqref{eq:induced-bimodule-grading}, so any family $(\mP_\theta)_\theta$
gives a graded bimodule by direct sum.

If two types have different double cosets, every graded homomorphism between
the corresponding induced bimodules is zero because their supports are
disjoint.  Fix $C$.  The degree-$g_C$ component of
$\mA_{(C,\lambda)}(\mP)$ is $1\otimes\mP\cong\mP_\lambda$: among the cosets
of $\Sset_C$ in $\Q$, only $\Sset_C$ itself fixes $g_C$.  The induced module
is generated by this component under $\Q$, so a graded homomorphism is
uniquely determined by its restriction to degree $g_C$.  Restriction therefore
gives
\[
 \Homgr\bigl(\mA_{(C,\lambda)}(\mP),
             \mA_{(C,\mu)}(\mP')\bigr)
 \longrightarrow \Hom_{\R\Sset_C}(\mP_\lambda,\mP'_\mu).
\]
It is an isomorphism: an $\R\Sset_C$-map $f$ extends uniquely by
$q\otimes p\mapsto q\otimes f(p)$.  If $\lambda\neq\mu$, then
$\bee_\lambda$ acts as the identity on $\mP_\lambda$ and as zero on
$\mP'_\mu$.  Hence every $\R\Sset_C$-map is zero.  If
$\lambda=\mu$, the action of $\R\Sset_C$ on both modules is scalar through
$\lambda$, so every $\R$-linear map is $\R\Sset_C$-linear.  This proves
\eqref{eq:Hom-induced}.  Finally, a graded isomorphism $\mM\to\mN$ restricts to isomorphisms
$\mP_\theta(\mM)\to\mP_\theta(\mN)$ for every $\theta$.
Conversely, any family of such $\R$-module isomorphisms induces, by the
preceding Hom calculation, isomorphisms of the corresponding summands in
\eqref{eq:classification-decomposition}.  Their direct sum is a graded
isomorphism $\mM\to\mN$.  This proves
\eqref{eq:isomorphism-criterion}.
\end{proof}

\draft{\par\medskip\noindent\textbf{\Cref{thm:classification}, verification ledger.}
\begin{enumerate}[label=\textup{(\arabic*)}]
\item The degree set is the disjoint union of its $\Q$-orbits, so
$\mM=\bigoplus_C\mM_C$.  \Cref{lem:orbit-induction} reconstructs each
orbit summand from the single degree $g_C$.

\item Since $1=\sum_\lambda\bee_\lambda$ and the idempotents are orthogonal,
\[
 \mM^{g_C}=\bigoplus_\lambda\bee_\lambda\mM^{g_C}.
\]
Tensor product distributes over this finite direct sum.  On the
$\lambda$-summand the stabilizer acts by the scalar character $\lambda$, so
the induced summand is exactly $\mA_{(C,\lambda)}
(\mP_{(C,\lambda)}(\mM))$.

\item Conversely, the direct sum of an arbitrary family is legitimate:
every element of a module direct sum has finite support, and the action of a
fixed element of $\RQ$ acts within each summand.  The grading is the direct
sum of the gradings already checked on the induced objects.

\item If $C\neq D$, the source is supported on $C$ and the target on $D$.
A strict graded map sends the degree-$x$ source component into the degree-$x$
target component, which is zero for every $x\in C$.  Hence the map is zero.

\item Fix $C$.  A tensor has degree $g_C$ exactly when its coset
$q\Sset_C$ fixes $g_C$, hence exactly when $q\in\Sset_C$.  Balancing then
reduces it to $1\otimes p$.  Acting by all $q\in\Q$ on these tensors
generates the whole induced module.

\item An $\R\Sset_C$-map $f$ extends by
$q\otimes p\mapsto q\otimes f(p)$.  It is balanced because
$f(sp)=sf(p)$.  Conversely, any graded module map is determined by degree
$g_C$, so restriction and extension are inverse.

\item If $\lambda\neq\mu$ and $f:\mP_\lambda\to\mP'_\mu$ is a stabilizer
map, then
$f(p)=f(\bee_\lambda p)=\bee_\lambda f(p)=0$, since orthogonality makes
$\bee_\lambda$ zero on weight $\mu$.  If $\lambda=\mu$, both actions are
the same scalar action, so every $\R$-linear map commutes with it.

\item A graded isomorphism preserves every $C$ and every weight idempotent,
so it restricts to coefficient-module isomorphisms.  Conversely, inducing a
family of coefficient isomorphisms and taking their direct sum gives a strict
graded isomorphism.  This proves both directions of
\eqref{eq:isomorphism-criterion}, including zero coefficient modules and
infinite families.
\end{enumerate}\par\medskip}

\begin{remark}
\label{rem:fixed-type}
For each type $\theta$, the assignment $\mP\mapsto\mA_\theta(\mP)$ induces,
by \eqref{eq:Hom-induced}, a bijection between $\R$-module homomorphisms
$\mP\to\mP'$ and graded bimodule homomorphisms
$\mA_\theta(\mP)\to\mA_\theta(\mP')$.  Thus, already for one fixed type, the
graded isomorphism problem contains the isomorphism problem for arbitrary
$\R$-modules.  Ranks determine the answer only under additional hypotheses on
those modules.
\end{remark}

\begin{corollary}
\label{cor:finite-generation}
For a $\G$-graded $(\RH,\RK)$-bimodule $\mM$, the following are equivalent:
\begin{enumerate}[label=\textup{(\roman*)}]
 \item $\mM$ is finitely generated as an $(\RH,\RK)$-bimodule.
 \item $\mM$ is finitely generated as an $\RQ$-module.
 \item $\mM$ is finitely generated as an $\R$-module.
 \item only finitely many $\mP_\theta(\mM)$ are nonzero and every nonzero
       $\mP_\theta(\mM)$ is finitely generated over $\R$.
\end{enumerate}
\end{corollary}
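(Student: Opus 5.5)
The plan is to prove the cycle (i)$\Leftrightarrow$(ii), (ii)$\Rightarrow$(iii)$\Rightarrow$(iv)$\Rightarrow$(ii), using \cref{thm:classification} to reduce everything to the coefficient modules.

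For (i)$\Leftrightarrow$(ii) we use the convention that an $(\RH,\RK)$-bimodule is the same as a left $\RQ$-module via $(h,k^{\mathrm{op}})\cdot m=hmk$. Since $\R$ acts centrally, the sub-bimodule generated by a finite set $\{m_1,\dots,m_r\}$ is $\sum_i\RH\, m_i\,\RK$. This equals $\sum_i\RQ\cdot m_i$, because $\RQ$ is spanned over $\R$ by the elements $(h,k^{\mathrm{op}})$. For (ii)$\Rightarrow$(iii), note that $\RQ$ is a free $\R$-module of rank $|\Q|<\infty$. Hence if $m_1,\dots,m_r$ generate over $\RQ$, the finite set $\{q\cdot m_i\}$ with $q\in\Q$ generates over $\R$. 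The implication (iii)$\Rightarrow$(ii) is immediate, so it is not needed separately in the cycle.

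For (iii)$\Rightarrow$(iv), apply \eqref{eq:classification-decomposition}. Each $\mP_\theta(\mM)=\bee_\lambda\mM^{g_C}$ is an $\R$-direct summand of $\mM$. Indeed, $\mM^{g_C}$ is a graded component, hence a summand, and $\bee_\lambda$ is an idempotent acting $\R$-linearly on it. A direct summand of a finitely generated $\R$-module is a quotient of it, so each $\mP_\theta(\mM)$ is finitely generated.

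Finiteness of the set of nonzero $\mP_\theta(\mM)$ is the only point needing care, and it holds because a finitely generated module cannot have infinitely many nonzero summands. Explicitly, take finitely many $\R$-generators of $\mM$. Each generator has finite support and finitely many nonzero weight components. So only finitely many pairs (degree $g_C$, weight $\lambda$) receive nonzero projections of generators. Any other $\mP_\theta(\mM)$ is the image of $\mM$ under the projection $m\mapsto\bee_\lambda\pi_{g_C}(m)$. This projection kills every generator, so that $\mP_\theta(\mM)$ vanishes.

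For (iv)$\Rightarrow$(ii), use \eqref{eq:classification-decomposition} again. Now $\mM$ is graded-isomorphic to a finite direct sum of modules $\mA_\theta(\mP_\theta)$ with each $\mP_\theta$ finitely generated over $\R$. If $p_1,\dots,p_s$ generate $\mP_\theta$, then $1\otimes p_1,\dots,1\otimes p_s$ generate $\RQ\otimes_{\R\Sset_C}\mP_\lambda$ over $\RQ$. A finite direct sum of finitely generated $\RQ$-modules is finitely generated, which gives (ii).

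No step should be a real obstacle. The most delicate one is the finiteness claim in (iii)$\Rightarrow$(iv), handled by the projection argument above; it does not require $\mM$ to be Noetherian.
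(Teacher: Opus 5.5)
Your proof is correct and follows essentially the paper's route. You use the enveloping-algebra identification for (i)$\Leftrightarrow$(ii) and finite freeness of $\RQ$ over $\R$. For (iii)$\Rightarrow$(iv) you observe that each $\mP_\theta(\mM)=\bee_\lambda\mM^{g_C}$ is an $\R$-direct summand, and that only finitely many types meet the homogeneous and weight components of a finite generating set. The only cosmetic difference is how you close the cycle: you prove (iv)$\Rightarrow$(ii) from the generators $1\otimes p_j$ of each induced summand, whereas the paper proves (iv)$\Rightarrow$(iii) from $\mA_\theta(\mP)_\R\cong\mP^{\,[\Q:\Sset_C]}$.
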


\begin{proof}
The first two conditions are equivalent because the bimodule action is the
module action of the enveloping algebra
$\RH\otimes_\R(\RK)^{\mathrm{op}}\cong\RQ$, via
$(a\otimes b^{\mathrm{op}})m=amb$.  If $\mM$ is finitely generated
over $\RQ$, then it is finitely generated over $\R$ because $\RQ$ is finite
free over $\R$.  Conversely, an $\R$-generating set also generates $\mM$ over
$\RQ$, since $\R\subseteq\RQ$.  Thus (ii) and (iii) are equivalent.

For $\theta=(C,\lambda)$,
$\mA_\theta(\mP)_\R\cong
 \mP^{\,[\Q:\Sset_C]}$,
because $\RQ$ is free over $\R\Sset_C$ on representatives of
$\Q/\Sset_C$.  Hence \eqref{eq:classification-decomposition} shows that (iv)
implies (iii).  Conversely, if $\mM$ is finitely generated over $\R$, replace a finite set of
generators by all of their homogeneous components.  This is still a finite
$\R$-generating set, so only finitely many degrees, hence finitely many double
cosets, occur.  Since each $\widehat{\Sset_C}$ is finite, only
finitely many types can occur.  Each $\mM^{g_C}$ is an $\R$-direct summand of $\mM$ and is therefore
finitely generated.  Applying the idempotent $\bee_\lambda$ shows that each
$\mP_\theta(\mM)$ is a direct summand of $\mM^{g_C}$ and hence finitely
generated.  Thus (iii)
implies (iv).
\end{proof}

\draft{\par\smallskip\noindent\textbf{\Cref{cor:finite-generation}, expanded.}
The enveloping algebra action is not merely analogous to the bimodule action:
the formulas identify them exactly, so a generating set in one sense is a
generating set in the other.  If $x_1,\ldots,x_r$ generate over $\RQ$, then
the finitely many elements $q_ix_j$, with $(q_i)$ an $\R$-basis of $\RQ$,
generate over $\R$.  Conversely, an $\R$-generating set generates over
$\RQ$ because every scalar $r\in\R$ is already in $\RQ$.

For the last equivalence, remember that every element of a direct sum has
only finitely many nonzero homogeneous components.  Replacing finitely many
generators by all their homogeneous components therefore still gives a
finite set.  Its degrees contain the entire support: an $\R$-linear
combination cannot create a new degree.  Each $\mM^{g_C}$ is the image of the
$\R$-linear projection onto one degree, so it is a direct summand of $\mM$.
Applying the idempotent $\bee_\lambda$ gives another direct summand.  Direct
summands of finitely generated modules are finitely generated, with no
Noetherian hypothesis needed.\par\smallskip}

\begin{corollary}
\label{cor:projective-free-atomic}
Let $\mM$ be finitely generated.  Then:
\begin{enumerate}[label=\textup{(\roman*)}]
 \item $\mM$ is projective over $\R$ if and only if every $\mP_\theta(\mM)$ is
       projective over $\R$.
 \item $\mM$ is a finite direct sum of atomic graded bimodules if and only if every
       $\mP_\theta(\mM)$ is finite free over $\R$.
\end{enumerate}
In the second case, if $\mP_\theta(\mM)\cong \R^{m_\theta}$, then
\begin{equation}
 \mM\cong_{\gr}\bigoplus_\theta
 \mA_\theta(\R)^{\,m_\theta},
 \label{eq:atomic-multiplicity-decomposition}
\end{equation}
and the multiplicities $m_\theta$ are uniquely determined.
\end{corollary}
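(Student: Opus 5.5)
The plan is to reduce everything to the decomposition \eqref{eq:classification-decomposition} of \cref{thm:classification}, together with the observation in the proof of \cref{cor:finite-generation} that $\mA_\theta(\mP)\cong\mP^{[\Q:\Sset_C]}$ as $\R$-modules. By \cref{cor:finite-generation}, only finitely many $\mP_\theta(\mM)$ are nonzero, and each is finitely generated. The sum in \eqref{eq:classification-decomposition} is therefore a finite direct sum of modules $\mP_\theta(\mM)^{[\Q:\Sset_C]}$ over $\R$.

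For (i), one direction is immediate. If every $\mP_\theta(\mM)$ is projective, then $\mM$ is, as an $\R$-module, a finite direct sum of finite powers of projectives. Conversely, each $\mP_\theta(\mM)=\bee_\lambda\mM^{g_C}$ is an $\R$-direct summand of $\mM$: it is the image of the projection onto degree $g_C$ followed by the idempotent $\bee_\lambda$, exactly as in the proof of \cref{cor:finite-generation}. A direct summand of a projective module is projective.

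For (ii), I first identify atomic bimodules. If $m$ is homogeneous of degree $g$ with $\R m\cong\R$ and weight $\lambda$, I claim $\RH m\RK\cong_{\gr}\mA_{(C,\lambda)}(\R)$, where $C=\H g\K$. To prove this, take $g$ in place of $g_C$; this is harmless, since $\Sset_g=\Sset_C$ by \cref{lem:stabilizer-basic} and translating by $q$ with $q\cdot g_C=g$ gives a graded isomorphism of induced modules. Then apply \cref{lem:orbit-induction} to the sub-bimodule $\RQ m$. Its degree-$g$ component is $\RQ m\cap\mM^g=\R\Sset_g m=\R m$, using $s m=\lambda(s)m$. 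Hence $\RQ m\cong\RQ\otimes_{\R\Sset_g}\R m\cong\mA_{(C,\lambda)}(\R)$. Conversely, $\mA_\theta(\R)$ is atomic, generated by $1\otimes1$.

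With this in hand, the "if" direction follows by writing $\mP_\theta(\mM)\cong\R^{m_\theta}$. The map $\mA_\theta$ commutes with finite direct sums because tensor product distributes over them, so \eqref{eq:classification-decomposition} gives \eqref{eq:atomic-multiplicity-decomposition}, and this is a finite sum of atomic bimodules. For "only if", suppose $\mM\cong_{\gr}\bigoplus_i\mA_{\theta_i}(\R)$. Applying the functor $\mP_\theta$ yields $\mP_\theta(\mM)\cong\R^{\#\{i:\theta_i=\theta\}}$, because $\mP_\theta$ commutes with direct sums and $\mP_\theta(\mA_\phi(\R))$ is $\R$ if $\phi=\theta$ and $0$ otherwise. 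The latter holds because the degree-$g_C$ component of $\mA_{(C,\mu)}(\R)$ is $\R_\mu$, on which $\bee_\lambda$ acts as $\delta_{\lambda\mu}$. The components of other double cosets vanish in degree $g_C$.

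The main obstacle I expect is uniqueness of the $m_\theta$. By \eqref{eq:isomorphism-criterion}, $m_\theta$ is determined as the rank $n$ in $\mP_\theta(\mM)\cong\R^{n}$, so I need the invariant basis number property for the nonzero commutative ring $\R$. I would argue this directly by choosing a maximal ideal $\mathfrak m$ and tensoring with $\R/\mathfrak m$: an isomorphism $\R^a\cong\R^b$ then becomes an isomorphism of vector spaces of dimensions $a$ and $b$ over a field, forcing $a=b$.
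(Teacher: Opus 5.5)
Your proposal is correct and follows essentially the same route as the paper. Part (i) uses the $\R$-module decomposition $\mM\cong_\R\bigoplus_\theta\mP_\theta(\mM)^{[\Q:\Sset_C]}$ together with the direct-summand projection; part (ii) identifies each atomic bimodule with $\mA_{(C,\lambda)}(\R)$ via \cref{lem:orbit-induction}, and uniqueness comes from \cref{thm:classification} plus the invariant basis number of the nonzero commutative ring $\R$. The only cosmetic difference is that you move the base point of the induced grading from $g$ to $g_C$, whereas the paper replaces the generator $m$ by a translate $q\cdot m$ of degree $g_C$.
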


\begin{proof}
By \cref{cor:finite-generation}, only finitely many
$\mP_\theta(\mM)$ are nonzero.  As $\R$-modules,
\begin{equation}
 \mM\cong_\R
 \bigoplus_{\theta=(C,\lambda)}
 \mP_\theta(\mM)^{\,[\Q:\Sset_C]}.
 \label{eq:R-module-decomposition}
\end{equation}
If every $\mP_\theta(\mM)$ is projective, then \eqref{eq:R-module-decomposition}
shows that $\mM$ is projective.  Conversely, each
$\mP_\theta(\mM)$ is an $\R$-direct summand of $\mM$: first project to
$\mM^{g_C}$ using the grading, then apply $\bee_\lambda$.  Since
$\Sset_C$ fixes $g_C$, the latter idempotent preserves $\mM^{g_C}$.
Hence projectivity of $\mM$ implies projectivity of every
$\mP_\theta(\mM)$.  This proves (i).

If $\mP_\theta(\mM)\cong\R^{m_\theta}$, then
\eqref{eq:classification-decomposition} gives
\eqref{eq:atomic-multiplicity-decomposition}, and each
$\mA_\theta(\R)$ is atomic.  Conversely, let $\RH m\RK$ be atomic of type $(C,\lambda)$.  After replacing
$m$ by a translate of degree $g_C$, its degree-$g_C$ component is $\R m$:
only elements of $\Sset_C$ preserve that degree, and they act on $m$ through
$\lambda$.  Hence its only nonzero coefficient module is a copy of $\R$.
It follows that a finite direct sum of atomic bimodules has finite free
$\mP_\theta(\mM)$ for every $\theta$.  \Cref{thm:classification} also
shows that the integers $m_\theta$ are unique.
\end{proof}

\draft{\par\smallskip\noindent\textbf{\Cref{cor:projective-free-atomic}, expanded.}
Equation~\eqref{eq:R-module-decomposition} repeats each coefficient module
once for each coset of $\Sset_C$.  A finite direct sum of projective modules
is projective.  In the reverse direction, the composite
\[
 \mM\longrightarrow\mM^{g_C}
 \xrightarrow{\,\bee_\lambda\,}\bee_\lambda\mM^{g_C}
\]
is an idempotent $\R$-linear projection, so the coefficient module is a
direct summand of the projective module $\mM$ and is projective.  Nothing here
says that a projective module is free.

If a coefficient module is $\R^{m_\theta}$, induction distributes over this
finite direct sum and gives $m_\theta$ copies of $\mA_\theta(\R)$.  Conversely,
for an atomic generator $m$ of degree $g_C$, only stabilizer elements preserve
that degree, and each acts by $\lambda$.  Hence the degree-$g_C$ component is
exactly $\R m$, so its coefficient module is one copy of $\R$.  Applying this
to every atomic summand gives finite free coefficient modules.  Uniqueness
comes from the strict graded classification: the coefficient module at each
fixed type is $\R^{m_\theta}$, whose finite rank is well-defined over the
nonzero commutative ring $\R$.\par\smallskip}

For $\theta=(C,\lambda)$ put $\mA_\theta=\mA_\theta(\R)$.  The degree-$g_C$
component is $1\otimes\R\cong\R$, so $1\otimes1$ is a homogeneous free
weight vector of degree $g_C$ and weight $\lambda$.  Hence $\mA_\theta$ is
atomic of type $\theta$.  Conversely, let $\mM=\RH m\RK$ be atomic of type $(C,\lambda)$.  Replacing
$m$ by $q\cdot m$ for a suitable $q\in\Q$, we may assume
$\deg m=g_C$.  If $s\in\Sset_C$, then
$s(q\cdot m)=q(s\cdot m)=\lambda(s)(q\cdot m)$ because $\Q$ is abelian.  Hence
the weight remains $\lambda$.  Every element of $\mM^{g_C}$ is an $\R$-linear combination of elements
$h m k$ with $h g_C k=g_C$, hence with $(h,k)\in\Sset_C$.  Each such element
is $\lambda(h,k)m$.  Therefore $\mM^{g_C}=\R m$.  Thus
\cref{lem:orbit-induction} gives
\[
 \mM\cong_{\gr}\RQ\otimes_{\R\Sset_C}\R m
 \cong_{\gr}\mA_{(C,\lambda)}.
\]
For the incidence-algebra bimodules in
\cite[Section~7]{HindiSchutzer2026Incidence}, each nonzero
$\mP_\theta(\mM)$ is isomorphic to $\R$, and the extension sets of the
occurring types are pairwise disjoint.  This is the multiplicity-free case
treated in \cref{sec:multiplicity-free}.

\draft{\par\smallskip\noindent\textbf{Final atomic identification.}
If the original atomic generator has degree $x\in C$, choose $q$ with
$q\cdot x=g_C$.  Since $q$ is a unit, $q\cdot m$ still generates the same
bimodule.  For $s\in\Sset_C=\Sset_x$, abelianness of $\Q$ gives
$s(qm)=q(sm)=\lambda(s)qm$, so the weight is unchanged.  Any element of
degree $g_C$ is a sum of terms $h(qm)k$ with $hg_Ck=g_C$.  The pair $(h,k)$
then lies in $\Sset_C$, so this term is the scalar $\lambda(h,k)qm$.
Therefore the entire degree-$g_C$ component is the single free line
$\R(qm)$, and orbit induction identifies the atomic bimodule with
$\mA_{(C,\lambda)}$.\par\smallskip}

\section{Yetter-Drinfel'd modules}
\label{sec:YD}

In this section $\R=\F$ is a field and the standing splitting assumptions
remain in force.  We compare \cref{thm:classification} with the induction construction of
Dos Santos and Yasumura \cite[Definition~3.1, Remark~3.2 and Proposition~3.4]{DSY2025}.
Their Proposition~3.4 is stated for finite $\G$.  Its balancing calculation
does not use this hypothesis, so the same formula defines the functor for
arbitrary $\G$.  Let
$A=\H\cap\K$ and embed $A$ in $\Q=\H\times\K^{\mathrm{op}}$ by
$\delta(a)=(a,a^{-1})$.  For every subgroup $B\leq A$ we write
$\Delta B=\delta(B)=\{(b,b^{-1}):b\in B\}$.  If $L\leq\Q$ and $\lambda\in\widehat L$, then $\F_\lambda$ denotes the
one-dimensional $\F L$-module on which $l$ acts as multiplication by
$\lambda(l)$.  For a cardinal $m$, $\F_\lambda^m$ means the direct sum of $m$
copies of $\F_\lambda$.  For subgroups $D\leq S\leq\Q$ and an
$\F D$-module $\mW$, we use $\Ind_D^S\mW=\F S\otimes_{\F D}\mW$.

Following \cite[Definition~3.1 and Remark~3.2]{DSY2025}, an
$A$-Yetter-Drinfel'd $\G$-graded module is a $\G$-graded left $\F A$-module
$\mV=\bigoplus_{g\in \G}\mV^g$ satisfying
$a\mV^g\subseteq \mV^{aga^{-1}}$ for $a\in A$ and $g\in\G$.
The functor considered in \cite[Proposition~3.4]{DSY2025} is
\begin{equation}
 \mV\longmapsto
 \mV^\uparrow=\FQ\otimes_{\F A}\mV,
 \label{eq:YD-up-functor}
\end{equation}
where the right $\F A$-module structure on $\FQ$ is
$q\cdot a=q\delta(a)$, and $\deg(q\otimes v)=q\cdot\deg(v)$.  This grading is compatible with the balancing relation: if $v$ is homogeneous,
then
\[
 \deg(q\delta(a)\otimes v)
 =q\cdot(a(\deg v)a^{-1})
 =\deg(q\otimes av).
\]

For $g\in\G$, put $C_A(g)=\{a\in A:ag=ga\}$ and
$D_g=\Delta C_A(g)=\Sset_g\cap\Delta A$.

\draft{\par\smallskip\noindent\textbf{Checking the Yetter-Drinfel'd setup.}
The embedding $\delta$ uses the inverse because the second factor is
$\K^{\mathrm{op}}$.  For $a,b\in A$,
\[
 \delta(a)\delta(b)
 =(ab,(b^{-1}a^{-1})^{\mathrm{op}})
 =(ab,((ab)^{-1})^{\mathrm{op}})=\delta(ab).
\]
If $v$ has degree $x$, then $av$ has degree $axa^{-1}$.  The tensor relation
$q\delta(a)\otimes v=q\otimes av$ is homogeneous because both sides have
degree $q\cdot(axa^{-1})$.  Thus the induced grading is well-defined.
Finally,
\[
 \delta(a)\in\Sset_g
 \Longleftrightarrow aga^{-1}=g
 \Longleftrightarrow ag=ga,
\]
so $\Sset_g\cap\Delta A=\Delta C_A(g)$, including for a trivial
centralizer.\par\smallskip}

\begin{lemma}
\label{lem:YD-orbit}
Let $\mV$ be an $A$-Yetter-Drinfel'd module supported on the $A$-conjugacy
orbit of $g$, and put $\mW=\mV^g$.  Then, as a graded $\F A$-module,
\[
 \mV\cong \F A\otimes_{\F C_A(g)}\mW,
\]
where $\mW$ carries the restricted $C_A(g)$-action.  Hence
\[
 \mV^\uparrow
 \cong_{\gr}
 \FQ\otimes_{\F D_g}\mW.
\]
The degree-$g$ component of $\mV^\uparrow$, viewed as an $\F\Sset_g$-module, is
\[
 \Ind_{D_g}^{\Sset_g}\mW.
\]
\end{lemma}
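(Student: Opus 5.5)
The plan has three steps.

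\textbf{Step 1: the $\F A$-module $\mV$ is induced from its degree-$g$ component.} We copy the argument of \cref{lem:orbit-induction}, with $A$ acting on $\G$ by conjugation in place of $\Q$ acting by \eqref{eq:Q-action}. The stabilizer of $g$ under conjugation is $C_A(g)$. Also $a\mV^x=\mV^{axa^{-1}}$, since $a^{-1}$ gives the reverse inclusion, so $\mW=\mV^g$ is an $\F C_A(g)$-module.

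Define $\Psi(a\otimes w)=aw$ from $\F A\otimes_{\F C_A(g)}\mW$ to $\mV$. This map is balanced, and it is graded if the source is given $\deg(a\otimes w)=aga^{-1}$. Choose representatives $T$ of $A/C_A(g)$, so that $\F A=\bigoplus_{t\in T}t\,\F C_A(g)$. The source is then $\bigoplus_t t\otimes\mW$, and multiplication by $t$ is a bijection $\mW\to\mV^{tgt^{-1}}$. Moreover, $tgt^{-1}=t'gt'^{-1}$ holds if and only if $t^{-1}t'\in C_A(g)$. Since $\mV$ is supported on the orbit, $\Psi$ is a graded isomorphism of $\F A$-modules.

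\textbf{Step 2: transitivity of induction.} We compute
\[
 \mV^\uparrow\cong\FQ\otimes_{\F A}\F A\otimes_{\F C_A(g)}\mW\cong\FQ\otimes_{\F C_A(g)}\mW .
\]
Here $\F C_A(g)$ acts on $\FQ$ through $\delta$, whose image is $D_g$. The map $\delta$ is an injective homomorphism, as checked in the verification note, so this tensor product is $\FQ\otimes_{\F D_g}\mW$. The isomorphism is $q\otimes a\otimes w\mapsto q\delta(a)\otimes w$.

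For the grading, $q\otimes(a\otimes w)$ has degree $q\cdot(aga^{-1})=q\delta(a)\cdot g$. So on the right-hand side we use $\deg(q\otimes w)=q\cdot g$. This is well defined because $D_g\le\Sset_g$ fixes $g$. Both identifications are therefore graded.

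\textbf{Step 3: the degree-$g$ component.} The right-hand side decomposes as
\[
 \FQ\otimes_{\F D_g}\mW\cong\FQ\otimes_{\F\Sset_g}\bigl(\F\Sset_g\otimes_{\F D_g}\mW\bigr).
\]
By the analysis in the proof of \cref{thm:classification}, only the identity coset of $\Sset_g$ in $\Q$ fixes $g$. To see this directly, decompose $\FQ=\bigoplus_t t\F\Sset_g$ over coset representatives; the summand of degree $g$ is the one with $t\in\Sset_g$. Hence the degree-$g$ component is $1\otimes\Ind_{D_g}^{\Sset_g}\mW$, and its $\Sset_g$-action is the natural one. This is exactly the setting of \cref{lem:orbit-induction} applied to $\mM=\mV^\uparrow$.

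\textbf{Main obstacle.} The delicate part is bookkeeping rather than mathematics. We must confirm that the right $\F A$-structure via $\delta$ composes correctly with the left $\F A$-action on $\F A\otimes_{\F C_A(g)}\mW$. We must also check that degrees match through each identification, using the opposite-group convention in $\Q$. The key identity is $\delta(a)\cdot g=aga^{-1}$, and I would verify that first.
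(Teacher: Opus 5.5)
Your proposal is correct and follows the paper's proof. It uses the same graded isomorphism $a\otimes w\mapsto aw$ built from representatives of $A/C_A(g)$, and the same associativity map $q\otimes(a\otimes w)\mapsto q\delta(a)\otimes w$ onto $\F\Q\otimes_{\F D_g}\mathcal{W}$. The only difference is cosmetic: in the last step the paper reads off the degree-$g$ component directly from the representatives of $\Q/D_g$ that lie in $\Sset_g$, whereas you first factor the induction through $\F\Sset_g$ and then use the cosets of $\Sset_g$, and the two are equivalent.
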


\begin{proof}
For $a\in A$, multiplication by $a$ maps $\mV^g$ isomorphically onto
$\mV^{aga^{-1}}$: the defining inclusion gives one direction and $a^{-1}$
gives the reverse inclusion.  Choose representatives $T$ for $A/C_A(g)$.  Then
$\mV=\bigoplus_{a\in T}a\mW$, and the map
$\F A\otimes_{\F C_A(g)}\mW\to\mV$ defined by $a\otimes w\mapsto aw$
is the direct sum of the isomorphisms
$\mW\to\mV^{aga^{-1}}$.  Hence it is a graded isomorphism, with
$\deg(a\otimes w)=aga^{-1}$.  Compare \cite[Lemma~3.3]{DSY2025}.  Via $C_A(g)\xrightarrow{\delta}D_g$, the tensor-product isomorphism is
explicitly
\[
 q\otimes(a\otimes w)\longmapsto q\delta(a)\otimes w,
\]
and gives
\[
 \mV^\uparrow
 \cong_{\gr}
 \FQ\otimes_{\F D_g}\mW.
\]
Choose representatives of $\Q/D_g$.  The tensors belonging to a representative
$qD_g$ have degree $q\cdot g$, and $q\cdot g=g$ precisely when
$q\in\Sset_g$.  The representatives of degree $g$ are therefore those of
$\Sset_g/D_g$, so the degree-$g$ component is
\[
 \F\Sset_g\otimes_{\F D_g}\mW
 =\Ind_{D_g}^{\Sset_g}\mW,
\]
with the natural $\F\Sset_g$-action.
\end{proof}

\draft{\par\smallskip\noindent\textbf{\Cref{lem:YD-orbit}, expanded.}
For $a\in A$, the defining condition gives
$a\mV^g\subseteq\mV^{aga^{-1}}$.  Applying it to $a^{-1}$ supplies the
inverse inclusion, so multiplication by $a$ is an isomorphism of the two
components.  Moreover $aga^{-1}=bgb^{-1}$ exactly when
$b^{-1}a\in C_A(g)$, so the distinct orbit degrees are indexed by
$A/C_A(g)$.

The map $a\otimes w\mapsto aw$ is balanced because
$ac\otimes w=a\otimes cw$ for $c\in C_A(g)$, and both sides map to $acw$.
On coset representatives it is the direct sum of the componentwise
isomorphisms just checked.

For the tensor-associativity map, the inner relation is respected because
$c$ acts on $\mW$ as $\delta(c)$, and the outer relation
$q\delta(a')\otimes(a\otimes w)=q\otimes(a'a\otimes w)$ has the same image
$q\delta(a'a)\otimes w$ on both sides.  An inverse is represented by
$q\otimes w\mapsto q\otimes(1\otimes w)$.

Finally, the coset summand represented by $qD_g$ has degree $q\cdot g$.
It has degree $g$ exactly when $q\in\Sset_g$.  Modulo $D_g$, these are the
cosets of $D_g$ in $\Sset_g$, giving
$\F\Sset_g\otimes_{\F D_g}\mW$.\par\smallskip}

By \cref{lem:orbit-induction}, the assignments $\mM\mapsto\mM^g$ and
$\mP\mapsto\FQ\otimes_{\F\Sset_g}\mP$ are inverse up to graded isomorphism between graded $(\FH,\FK)$-bimodules
supported on $C=\H g\K$ and $\F\Sset_g$-modules.  By the \emph{essential
image} of a functor we mean the objects isomorphic to objects in its image.

\begin{theorem}
\label{thm:YD-essential-image}
Fix a double coset $C=\H g \K$.  Under the equivalence between graded
$(\FH,\FK)$-bimodules supported on $C$ and $\F\Sset_g$-modules, the objects in the
essential image of the functor \eqref{eq:YD-up-functor} are precisely the
$\F\Sset_g$-modules in the essential image of the induction functor
$\Ind_{D_g}^{\Sset_g}$
from $\F D_g$-modules to $\F\Sset_g$-modules.

Write
$\mP=\bigoplus_{\lambda\in\widehat{\Sset_g}}\F_\lambda^{m_\lambda}$,
where $m_\lambda=\dim_\F(\bee_\lambda\mP)$ may be an infinite cardinal.
Then $\mP$ occurs if and only if
\begin{equation}
 m_\lambda=m_\mu\quad\text{whenever }\lambda|_{D_g}=\mu|_{D_g}.
 \label{eq:essential-image-multiplicities}
\end{equation}
The functor is essentially surjective on graded bimodules supported on $C$
if and only if
\begin{equation}
 \H\cap g\K g^{-1}=C_{\H\cap \K}(g).
 \label{eq:YD-local-surjectivity-group}
\end{equation}
It is essentially surjective on all graded $(\FH,\FK)$-bimodules if and only if
\eqref{eq:YD-local-surjectivity-group} holds for every double coset $\H g \K$.
\end{theorem}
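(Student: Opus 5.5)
The plan is to reduce everything to the degree-$g$ component and then to a character computation for the abelian group $\Sset_g$.

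\textbf{Step 1: the essential image.} Let $\mV$ be any Yetter-Drinfel'd module. Its degree set splits into $A$-conjugacy orbits, so $\mV$ is the direct sum of its orbit summands, and $\uparrow$ commutes with direct sums. An $A$-conjugacy orbit of $x$ is contained in the double coset $\H x\K$, because $axa^{-1}=a x a^{-1}$ with $a\in\H$ and $a^{-1}\in\K$. Hence the part of $\mV^\uparrow$ supported on $C$ comes only from orbit summands lying in $C$.

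By \cref{lem:stabilizer-basic}, each such orbit summand through $x\in C$ can be transported to one through $g$. Indeed $x=h_0gk_0$, and the degree-$g$ component of its induction is $\Ind_{D_x}^{\Sset_x}\mV^x$ by \cref{lem:YD-orbit}. One must check that $D_x$ and $D_g$ give isomorphic induced $\F\Sset_g$-modules. I would avoid this issue by working directly with $g$: the degree-$g$ part of $\FQ\otimes_{\F D_x}\mW$ is isomorphic, via translation by $q$ with $q\cdot x=g$ and abelianness of $\Q$, to the degree-$x$ part, since $\Sset_x=\Sset_g$.

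More carefully, $\FQ\otimes_{\F D_x}\mW$ is supported on $C$. Its degree-$x$ component is $\Ind_{D_x}^{\Sset_g}\mW$. By \cref{lem:orbit-induction}, the bimodule is recovered as $\FQ\otimes_{\F\Sset_g}$ of that component, and restricting to degree $g$ instead gives an isomorphic $\F\Sset_g$-module because $\Q$ is abelian.

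It remains to see that $D_x=\Delta C_A(x)$ equals $D_g$. This is the point I expect to be the main obstacle. The relation $\Sset_x\cap\Delta A=\Sset_g\cap\Delta A$ holds since $\Sset_x=\Sset_g$, so in fact $D_x=D_g$ and the obstacle dissolves. Since $\Ind$ commutes with direct sums, the essential image over $C$ consists exactly of the modules $\Ind_{D_g}^{\Sset_g}\mW$ up to isomorphism. Conversely, every $\F D_g$-module $\mW$ arises: by \cref{lem:YD-orbit}, taking $\mV=\F A\otimes_{\F C_A(g)}\mW$, graded with $\deg(a\otimes w)=aga^{-1}$, is a Yetter-Drinfel'd module.

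\textbf{Step 2: the multiplicity criterion.} $\F D_g$ is split by \cref{lem:split-stabilizers}, so every $\F D_g$-module is $\bigoplus_\nu\F_\nu^{n_\nu}$. Then
\[
\Ind_{D_g}^{\Sset_g}\F_\nu\cong\bigoplus_{\lambda|_{D_g}=\nu}\F_\lambda .
\]
To justify this, note that $\Ind_{D_g}^{\Sset_g}\F_\nu$ has dimension $[\Sset_g:D_g]$, which is exactly the number of extensions by \cref{lem:split-stabilizers} applied inside $\Sset_g$. By Frobenius reciprocity, the multiplicity of each $\F_\lambda$, namely $\dim\Hom_{\Sset_g}(\Ind\F_\nu,\F_\lambda)=\dim\Hom_{D_g}(\F_\nu,\F_\lambda|_{D_g})$, is $1$ when $\lambda$ extends $\nu$ and $0$ otherwise. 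Hence induced modules have $m_\lambda=n_{\lambda|_{D_g}}$, which is constant on restriction fibers. Conversely, if \eqref{eq:essential-image-multiplicities} holds, set $n_\nu=m_\lambda$ for any extension $\lambda$ of $\nu$; every $\nu$ has one by surjectivity of restriction. Since $\F$-modules are determined by the cardinals $m_\lambda$, $\mP$ is in the image.

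\textbf{Step 3: essential surjectivity.} The criterion holds for all $\mP$ if and only if restriction $\widehat{\Sset_g}\to\widehat{D_g}$ is injective. If it is not injective, take $\mP=\F_\lambda$ for a single $\lambda$ in a fiber of size at least two, and this $\mP$ violates \eqref{eq:essential-image-multiplicities}. Injectivity holds if and only if $[\Sset_g:D_g]=1$, since the fibers have size $[\Sset_g:D_g]$. This means $D_g=\Sset_g$. Projecting to the first coordinate, which is injective by \cref{lem:stabilizer-basic}, this becomes $\H\cap g\K g^{-1}=C_A(g)$, using that the first projection of $D_g$ is $C_A(g)$. The global statement follows because both bimodules and Yetter-Drinfel'd modules decompose over double cosets, and $\uparrow$ respects this decomposition by Step 1.
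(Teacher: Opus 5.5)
Your proposal is correct and follows the paper's proof essentially step for step. You restrict to the $A$-conjugacy orbits inside $C$, identify $D_x$ with $D_g$, and transport the degree-$x$ component to degree $g$ using that $\Q$ is abelian; you then realize every $\Ind_{D_g}^{\Sset_g}\mW$ via $\F A\otimes_{\F C_A(g)}\mW$, read off the fiber condition, and reduce essential surjectivity to $D_g=\Sset_g$. The only local variations are these: you obtain $D_x=D_g$ from $\Sset_x=\Sset_g$ and $D_x=\Sset_x\cap\Delta A$ rather than from the paper's direct check $at=ta\iff ag=ga$, and you decompose $\Ind_{D_g}^{\Sset_g}\F_\nu$ by Frobenius reciprocity rather than by the idempotent identity $\bee_\eta=\sum_{\lambda|_{D_g}=\eta}\bee_\lambda$. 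Note also that the Yetter-Drinfel'd property of $\F A\otimes_{\F C_A(g)}\mW$ is a short direct check, not a consequence of \cref{lem:YD-orbit}.
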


\begin{proof}
First restrict a Yetter-Drinfel'd module to the degrees in $C$.  This is an
$A$-submodule because
$a t a^{-1}=\delta(a)\cdot t\in C$ for $a\in A$ and $t\in C$.
Decompose it into $A$-conjugacy orbits.  If $t=hgk\in C$ and $a\in A$, then,
since $A\leq\H\cap\K$ and $\H,\K$ are abelian,
\[
 at=ta
 \iff ahgk=hgka
 \iff ag=ga.
\]
Thus $C_A(t)=C_A(g)$ and $D_t=D_g$ for every $t\in C$.  For an
$A$-conjugacy orbit choose $t$ in that orbit and put $\mW_t=\mV^t$.
By \cref{lem:YD-orbit}, its image is
$\FQ\otimes_{\F D_g}\mW_t$.  Since $t$ and $g$ lie in the same $\Q$-orbit, choose $q_0\in\Q$ with
$q_0\cdot t=g$.  The degree-$g$ component is then
\[
 q_0\F\Sset_g\otimes_{\F D_g}\mW_t
 \cong \F\Sset_g\otimes_{\F D_g}\mW_t,
\]
as an $\F\Sset_g$-module, via $q_0s\otimes w\mapsto s\otimes w$.  This is
$\F\Sset_g$-linear because $\Q$ is abelian.  Summing over the $A$-conjugacy
orbits gives $\Ind_{D_g}^{\Sset_g}\mW$, where
$\mW=\bigoplus_t\mW_t$.

Conversely, let $\mW$ be any $\F D_g$-module.  Via $\delta$ regard it as an
$\F C_A(g)$-module and choose representatives $T$ for $A/C_A(g)$.  Then
\[
 \mV=\F A\otimes_{\F C_A(g)}\mW
     =\bigoplus_{a\in T}a\otimes\mW,
\]
with $\deg(a\otimes w)=aga^{-1}$.
Two representatives give the same conjugate of $g$ exactly when they lie in
the same coset of $C_A(g)$, so this is a grading.  Left multiplication by
$b\in A$ sends the component of degree $aga^{-1}$ to that of degree
$baga^{-1}b^{-1}$.  Thus $\mV$ is an $A$-Yetter-Drinfel'd
module.  Applying \eqref{eq:YD-up-functor} and \cref{lem:YD-orbit} gives
degree-$g$ module $\Ind_{D_g}^{\Sset_g}\mW$.  This proves the first
assertion.

Since $\F$ splits the finite abelian groups $D_g\leq\Sset_g$, every
$\F D_g$-module is the direct sum of the images of its Fourier idempotents,
and restriction $\widehat{\Sset_g}\to\widehat{D_g}$ is surjective.  If
$\eta\in\widehat{D_g}$, view $\bee_\eta\in\F D_g$ inside
$\F\Sset_g$.  Then
\[
 \bee_\eta
 =\sum_{\substack{\lambda\in\widehat{\Sset_g}\\
                    \lambda|_{D_g}=\eta}}\bee_\lambda.
\]
Indeed, applying any character of $\Sset_g$ gives the same value on both sides,
and the character table is invertible by \eqref{eq:character-orthogonality}.
Since $\F_\eta\cong\F D_g\bee_\eta$, it follows that
\[
 \Ind_{D_g}^{\Sset_g}\F_\eta
 \cong \F\Sset_g\bee_\eta
 =\bigoplus_{\substack{\lambda\in\widehat{\Sset_g}\\
                       \lambda|_{D_g}=\eta}}\F\bee_\lambda
 \cong
 \bigoplus_{\substack{\lambda\in\widehat{\Sset_g}\\
                       \lambda|_{D_g}=\eta}}\F_\lambda.
\]
Hence an induced module has multiplicity constant on each fiber of restriction.
Conversely, if the multiplicities $m_\lambda$ satisfy
\eqref{eq:essential-image-multiplicities}, choose
$n_\eta=m_\lambda$ for any $\lambda$ restricting to $\eta$ and put
$\mW=\bigoplus_\eta\F_\eta^{n_\eta}$.  Then
$\Ind_{D_g}^{\Sset_g}\mW\cong\mP$.  This proves
\eqref{eq:essential-image-multiplicities}.

The first projection identifies $\Sset_g$ with
$\H\cap g\K g^{-1}$ and maps $D_g$ onto $C_{\H\cap\K}(g)$.  Since the
projection is injective, $D_g=\Sset_g$ is equivalent to
\eqref{eq:YD-local-surjectivity-group}.  If $D_g=\Sset_g$, induction is the
identity.  If $D_g<\Sset_g$, every fiber of
$\widehat{\Sset_g}\to\widehat{D_g}$ has $[\Sset_g:D_g]>1$ elements, so no
one-dimensional $\F\Sset_g$-module belongs to the image.

Every graded bimodule is the direct sum of its sub-bimodules supported
on the double cosets, and the functor \eqref{eq:YD-up-functor} preserves this
decomposition.  If the local condition holds for every $C$, choose a
Yetter-Drinfel'd preimage for each double-coset summand and take their direct
sum.  Tensor product commutes with direct sums.  Conversely, if the condition
fails for some $C$, the orbit-induced graded bimodule associated with a
one-dimensional $\F\Sset_g$-module on that coset is not in the global
essential image.  Hence global essential surjectivity holds exactly when the
condition holds for every $C$.
\end{proof}

\draft{\par\medskip\noindent\textbf{\Cref{thm:YD-essential-image}, detailed verification.}
\begin{enumerate}[label=\textup{(\arabic*)}]
\item Degrees in $C$ form an $A$-stable set because conjugation by $a$ is
the action of $\delta(a)\in\Q$.  Hence restriction to $C$ and decomposition
into $A$-conjugacy orbits are legitimate module decompositions.

\item If $t=hgk$, then
\[
 at=ta\Longleftrightarrow ahgk=hgka
 \Longleftrightarrow hagk=hgak
 \Longleftrightarrow ag=ga.
\]
The rearrangements use $a,h\in\H$ and $a,k\in\K$.  Thus $C_A(t)$ and
$D_t$ are constant on the double coset.

\item An orbit representative $t$ need not be $g$.  Choose $q_0$ with
$q_0\cdot t=g$.  Degree-$g$ tensors are represented by $q_0s$ with
$s\in\Sset_g$.  The map $q_0s\otimes w\mapsto s\otimes w$ is balanced over
$D_g$ and is $\F\Sset_g$-linear: for $r\in\Sset_g$,
$rq_0s=q_0rs$ because $\Q$ is abelian.

\item Conversely, replacing $a\otimes w$ by
$ac\otimes w=a\otimes cw$ does not change $aga^{-1}$ because
$c\in C_A(g)$.  Multiplication by $b\in A$ changes this degree to
$baga^{-1}b^{-1}$, proving the Yetter-Drinfel'd condition.

\item Apply any character $\gamma$ of $\Sset_g$ to
\[
 \bee_\eta=\sum_{\lambda|_{D_g}=\eta}\bee_\lambda.
\]
Both sides evaluate to $1$ exactly when $\gamma|_{D_g}=\eta$, and to $0$
otherwise.  Since the character table is invertible, the elements are equal.
Thus inducing $\F_\eta$ produces once each character above $\eta$.

\item Direct sums now show that every induced module has multiplicity
constant on each restriction fiber.  Conversely, constant multiplicities
define $\mW=\bigoplus_\eta\F_\eta^{n_\eta}$ whose induction is $\mP$.
This remains valid for infinite cardinals because tensor product commutes with
arbitrary direct sums.

\item The first projection maps $\Sset_g$ isomorphically onto
$\H\cap g\K g^{-1}$ and maps $D_g$ onto
$C_{\H\cap\K}(g)$.  Hence the two equality criteria are equivalent.  If
$D_g<\Sset_g$, a single character module cannot be induced because its
restriction fiber has more than one character and induced multiplicities must
be equal throughout that fiber.

\item For the global converse, choose a preimage for every double-coset
summand and take their direct sum.  Tensoring with $\FQ$ returns the direct
sum of the local images.  This is the required global object.  Local
surjectivity alone would not finish the argument without this assembly.
\end{enumerate}\par\medskip}

\begin{corollary}
\label{cor:abelian-YD}
If $\G$ is abelian, then $D_g=\Sset_g=\Delta(\H\cap \K)$ for every $g\in \G$.
Hence \cref{thm:YD-essential-image} recovers the essential-surjectivity
assertion of \cite[Corollary~3.8]{DSY2025} for the functor
\eqref{eq:YD-up-functor} in the abelian case.
\end{corollary}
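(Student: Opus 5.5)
The proof reduces to checking the local condition \eqref{eq:YD-local-surjectivity-group} of \cref{thm:YD-essential-image} at every double coset, and then invoking that theorem's global criterion. So the plan is first to compute $\Sset_g$ and $D_g$ when $\G$ is abelian, and then to read off the conclusion.

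Fix $g\in\G$ and let $(h,k)\in\Sset_g$, so that $hgk=g$. Since $\G$ is abelian, this is equivalent to $hk=1$, that is, $k=h^{-1}$. Then $h=k^{-1}\in\K$, so $h\in\H\cap\K=A$. Conversely, for every $a\in A$ the pair $(a,a^{-1})$ fixes $g$. This gives
\[
 \Sset_g=\{(a,a^{-1}):a\in A\}=\Delta A,
\]
which is independent of $g$. Equivalently, in the language of \cref{lem:stabilizer-basic}, we have $\H\cap g\K g^{-1}=\H\cap\K$. For the smaller group, every $a\in A$ commutes with $g$, so $C_A(g)=A$ and therefore $D_g=\Delta C_A(g)=\Delta A$. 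Hence $D_g=\Sset_g=\Delta(\H\cap\K)$. Under the first projection this is exactly the equality \eqref{eq:YD-local-surjectivity-group}, which therefore holds for every double coset $\H g\K$.

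By the last assertion of \cref{thm:YD-essential-image}, the functor \eqref{eq:YD-up-functor} is then essentially surjective on all graded $(\FH,\FK)$-bimodules. Moreover, on each double coset the induction $\Ind_{D_g}^{\Sset_g}$ is the identity, so the multiplicity condition \eqref{eq:essential-image-multiplicities} is vacuous. This is the essential-surjectivity part of \cite[Corollary~3.8]{DSY2025}; faithfulness is not claimed and does not need to be addressed.

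The one point to check carefully is the opposite-group convention. The element $(h,k^{\mathrm{op}})$ acts by $g\mapsto hgk$, and $\delta(a)=(a,a^{-1})$, so the stabilizer condition $hk=1$ does match $\Delta A$ as defined. Apart from this, the argument is a direct specialization of \cref{thm:YD-essential-image}, and I do not expect any real obstacle.
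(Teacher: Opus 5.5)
Your proposal is correct and follows the paper's argument. You compute $\Sset_g=\Delta(\H\cap\K)$ from $hgk=g\iff hk=1$, and $D_g=\Delta(\H\cap\K)$ from $C_{\H\cap\K}(g)=\H\cap\K$, so \eqref{eq:YD-local-surjectivity-group} holds on every double coset and the global criterion of \cref{thm:YD-essential-image} applies. The paper's proof gives only the stabilizer computation; you additionally spell out the appeal to that theorem.
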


\begin{proof}
If $\G$ is abelian, $hgk=g$ if and only if
$h=k^{-1}\in\H\cap\K$, so $\Sset_g=D_g=\Delta(\H\cap\K)$.
\end{proof}

\draft{\par\smallskip\noindent\textbf{\Cref{cor:abelian-YD}.}
When $\G$ is abelian, $hgk=g$ is equivalent to $hk=1$, hence
$h=k^{-1}$.  Such a pair belongs to $\H\times\K$ exactly when
$h\in\H\cap\K$, so every stabilizer element is
$\delta(h)=(h,h^{-1})$.  Also every element of $A=\H\cap\K$ commutes with
$g$, so $C_A(g)=A$.  Therefore both $\Sset_g$ and $D_g$ are the same
diagonal subgroup.\par\smallskip}

\begin{example}
\label{ex:A4-counterexample}
Let $\G=A_4$ and $\H=\K=V_4$, where $V_4$ is the normal Klein four subgroup of double transpositions, and let
$g=(123)$.  Since $V_4\triangleleft A_4$,
\[
 \Sset_g=\{(gk^{-1}g^{-1},k):k\in V_4\}\cong V_4.
\]
Conjugation by $g$ cyclically permutes the three nonidentity elements of
$V_4$, so $C_{V_4}(g)=1$ and $D_g=1$.  Thus $D_g<\Sset_g$, so \eqref{eq:YD-up-functor} is not essentially surjective.
For $\lambda\in\widehat{\Sset_g}$, the atomic bimodule
\[
 \mA_{(V_4 g V_4,\lambda)}(\F)
\]
has degree-$g$ weight space $\F_\lambda$, whereas induction from $D_g=1$ gives equal
multiplicity to all four characters of $\Sset_g$.  This answers negatively the
question following \cite[Corollary~3.8]{DSY2025}.
\end{example}

\draft{\par\smallskip\noindent\textbf{The $A_4$ calculation.}
Write
\[
 V_4=\{1,(12)(34),(14)(23),(13)(24)\}.
\]
Conjugation by $g=(123)$ cycles the nonidentity elements:
\[
 (12)(34)\mapsto(14)(23)\mapsto(13)(24)\mapsto(12)(34).
\]
Hence $C_{V_4}(g)=1$.  Normality ensures that
$h=gk^{-1}g^{-1}$ lies in $V_4$ for every $k\in V_4$, and then $hgk=g$.
Conversely the stabilizer equation determines $h$ from $k$, so
$\Sset_g\cong V_4$ and has four elements, whereas
$D_g=\Delta C_{V_4}(g)=1$.

Induction from $1$ sends a vector space $W$ to
$\F\Sset_g\otimes W$.  The regular module of the split group $V_4$
contains each of its four characters once, so every character occurs with
multiplicity $\dim W$.  The atomic module in the example has multiplicity
one at one character and zero at the other three.  It therefore violates the
fiber condition.\par\smallskip}

\begin{remark}
\label{rem:SSY-free-product}
In \cite[Remark~24]{SSY2020}, $\G=\H*C_2$, $C_2=\langle w\rangle$, and
\cite[Corollary~23]{SSY2020} fails for the graded $(\FH,\FH)$-bimodule
$\FH\otimes_\F \FH$ with $\deg(h_1\otimes h_2)=h_1wh_2$.  By the normal
form theorem for free products,
\[
 \H\cap wHw^{-1}=1,
\]
so $\Sset_w(\H,\H)=1$.  The map $\FQ\to\FH\otimes_\F\FH$ defined by
$(h_1,h_2)\mapsto h_1\otimes h_2$ is then a graded $(\FH,\FH)$-bimodule isomorphism from
$\mA_{(\H w\H,1)}(\F)$ onto their example.  This does not contradict the
last assertion of \cite[Remark~24]{SSY2020}: viewed as a left $\FQ$-module,
our grading is by the $\Q$-set $\G$, not a $\G$-graded module structure over a
$\G$-graded algebra $\FQ$.  Thus the stabilizer in this nonabelian example is
trivial, rather than $\H\cap\H=\H$.
\end{remark}

\draft{\par\smallskip\noindent\textbf{The free-product example.}
If $x\in\H\cap w\H w^{-1}$, write $x=whw^{-1}=whw$.  When $h\neq1$, the
right side is a reduced free-product word of length three and cannot be in
the free factor $\H$.  Thus $h=1$ and $x=1$.

Since $\Sset_w=1$, the atomic induced module is $\FQ$.  The displayed map
sends its group-element basis bijectively to the tensor-product basis.  Left
multiplication changes the first tensor factor and right multiplication
changes the second.  The opposite-group multiplication gives the correct order
on the latter.  The basis element $(h_1,h_2)$ has degree $h_1wh_2$, so the map
is also strict graded.\par\smallskip}

\section{Peirce decomposition}
\label{sec:Fourier-Peirce}

We compare \cref{thm:classification} with the Peirce decomposition
of $\mM$.  Since $\Q=\H\times\K^{\mathrm{op}}$ is abelian, identify
$\hQ\cong\hH\times\hK$ and write $\xi=(\chi,\rho)$, where
\[
 \xi(h,k^{\mathrm{op}})=\chi(h)\rho(k).
\]
For such a pair we write
\[
 \bee_\xi=\bee_\chi\otimes \bee_\rho\in \RQ.
\]
Its action on an $(\RH,\RK)$-bimodule is the projection
$m\mapsto \bee_\chi m \bee_\rho$.  Since the Fourier idempotents form complete
sets of orthogonal idempotents in $\RH$ and $\RK$, every bimodule has the
(ungraded) Peirce decomposition
\[
 \mM=\bigoplus_{(\chi,\rho)\in\hH\times\hK}\bee_\chi\mM\bee_\rho.
\]
We call $\bee_\chi\mM\bee_\rho$ the $(\chi,\rho)$-\emph{Peirce component} of
$\mM$.  If $\xi=(\chi,\rho)$ and $m$ belongs to this component, then, for
$q=(h,k^{\mathrm{op}})$,
\[
 q\cdot m=h\bee_\chi m\bee_\rho k
 =\chi(h)\rho(k)m=\xi(q)m.
\]
Hence an $\R$-linear map between two $\xi$-Peirce components is automatically
an $\RQ$-module map.

For $S\leq\Q$ and $\lambda\in\widehat S$, let
$E(S,\lambda)=\{\xi\in\hQ:\xi|_S=\lambda\}$.  Thus $E(S,\lambda)$ is the set of extensions of $\lambda$ to $\Q$.  By
\cref{lem:split-stabilizers} it has cardinality $[\Q:S]$.  For a type
$\theta=(C,\lambda)$ write $E_\theta=E(\Sset_C,\lambda)$.

\draft{\par\smallskip\noindent\textbf{Peirce components before the calculation.}
A character of the direct product $\Q$ is determined by its restrictions to
$\H$ and $\K$, giving $\hQ\cong\hH\times\hK$.  Under the enveloping-algebra
action, $\bee_\chi\otimes\bee_\rho$ sends $m$ to
$\bee_\chi m\bee_\rho$.  The opposite multiplication is already built into
$\RQ$.

Expanding the two identities into Fourier idempotents gives
\[
 m=\sum_{\chi,\rho}\bee_\chi m\bee_\rho,
\]
and orthogonality makes the sum direct.  On the $\xi$-component every
$q\in\Q$ acts by the scalar $\xi(q)$.  Therefore any $\R$-linear map between
matching $\xi$-components automatically commutes with the whole bimodule
action.\par\smallskip}

\begin{proposition}
\label{prop:Peirce-induced}
Let $\theta=(C,\lambda)$ and let $\mP$ be an $\R$-module.  For
$\xi=(\chi,\rho)\in\hQ$ one has
\[
 \bee_\chi \mA_\theta(\mP) \bee_\rho
 \cong_\R
 \begin{cases}
  \mP,&\xi\in E_\theta,\\
  0,&\xi\notin E_\theta.
 \end{cases}
\]
\end{proposition}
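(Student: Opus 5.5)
The plan is to compute the $\xi$-Peirce component of $\mA_\theta(\mP)=\RQ\otimes_{\R\Sset_C}\mP_\lambda$ as the image of the idempotent $\bee_\xi\in\RQ$. On any bimodule viewed as an $\RQ$-module, $\bee_\xi$ acts exactly as $m\mapsto\bee_\chi m\bee_\rho$, so the component equals
\[
 \bee_\xi\bigl(\RQ\otimes_{\R\Sset_C}\mP_\lambda\bigr)
 =\RQ\bee_\xi\otimes_{\R\Sset_C}\mP_\lambda .
\]
The equality holds because left multiplication by $\bee_\xi$ on $\RQ$ is right $\R\Sset_C$-linear ($\Q$ is abelian), so $\RQ=\RQ\bee_\xi\oplus\RQ(1-\bee_\xi)$ is a decomposition of right $\R\Sset_C$-modules. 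Tensoring preserves this decomposition.

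Next I identify $\RQ\bee_\xi$. Since $q\bee_\xi=\xi(q)\bee_\xi$ for every $q\in\Q$ (the relation recorded after \eqref{eq:character-orthogonality}, applied with $S=\Q$), we get $\RQ\bee_\xi=\R\bee_\xi$. This is free of rank one over $\R$, because $\bee_\xi$ is a nonzero idempotent coordinate under $\RQ\cong\R^{|\Q|}$ from \cref{lem:split-stabilizers}. As a right $\R\Sset_C$-module, $s\in\Sset_C$ acts on it by the scalar $\xi(s)$. Hence
\[
 \RQ\bee_\xi\otimes_{\R\Sset_C}\mP_\lambda\cong \R_{\xi|_{\Sset_C}}\otimes_{\R\Sset_C}\mP_\lambda .
\]
The key step is to evaluate this tensor product of two rank-one character modules of $\R\Sset_C$. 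Put $\eta=\xi|_{\Sset_C}$. The module $\R_\eta$ is the cyclic module $\R\Sset_C\bee_\eta$, so the tensor product is $\bee_\eta\mP_\lambda$. On $\mP_\lambda$ the idempotent $\bee_\eta$ acts by the scalar
\[
 |\Sset_C|^{-1}\sum_{s\in\Sset_C}\eta(s^{-1})\lambda(s)=\delta_{\eta,\lambda},
\]
by \eqref{eq:character-orthogonality}. So the component is $\mP$ when $\eta=\lambda$, that is, when $\xi\in E_\theta$, and $0$ otherwise.

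I expect the only delicate point to be justifying $\R_\eta\cong\R\Sset_C\bee_\eta$ over a general ring rather than a field. This follows from the same relation $s\bee_\eta=\eta(s)\bee_\eta$ and $\R\Sset_C\bee_\eta=\R\bee_\eta\cong\R$. As a cross-check, I would also verify the result directly with the coset decomposition $\bigoplus_{t\in T}t\otimes\mP$ from the proof of \cref{lem:orbit-induction}. The vector $\sum_t\xi(t)^{-1}\,t\otimes p$ is a $\xi$-eigenvector precisely when $\xi$ agrees with $\lambda$ on $\Sset_C$. This gives an explicit $\R$-isomorphism $\mP\to\bee_\chi\mA_\theta(\mP)\bee_\rho$ in the case $\xi\in E_\theta$.
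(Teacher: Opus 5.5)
Your proof is correct and is essentially the paper's argument: both identify the component with $\bee_\xi\RQ\otimes_{\R\Sset_C}\mP_\lambda=\R\bee_\xi\otimes_{\R\Sset_C}\mP_\lambda$, and both decide vanishing by the orthogonality scalar $|\Sset_C|^{-1}\sum_{s}\lambda(s^{-1})\xi(s)$. The only difference is bookkeeping: the paper moves $\bee_\lambda$ across the tensor, computes $\bee_\xi\bee_\lambda$ in $\RQ$, and then writes the inverse $r\bee_\xi\otimes p\mapsto rp$, whereas you move $\bee_{\xi|_{\Sset_C}}$ onto $\mP_\lambda$ via $\R\Sset_C\bee_\eta\otimes_{\R\Sset_C}\mP_\lambda\cong\bee_\eta\mP_\lambda$.
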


\begin{proof}
Put $S=\Sset_C$.  Since $\bee_\xi\RQ=\R\bee_\xi$ and
$r\mapsto r\bee_\xi$ identifies $\R$ with $\R\bee_\xi$,
\[
 \bee_\xi\mA_\theta(\mP)
 =\bee_\xi\RQ\otimes_{\R S}\mP_\lambda.
\]
Let $\bee_\lambda\in\R S$ be the Fourier idempotent attached to $\lambda$.
Because $\bee_\lambda$ acts as the identity on $\mP_\lambda$,
\[
 \bee_\xi\otimes p=\bee_\xi\bee_\lambda\otimes p.
\]
Using $s\bee_\xi=\xi(s)\bee_\xi$ for $s\in S$ and character
orthogonality in $S$,
\[
 \bee_\xi\bee_\lambda
 =\frac1{|S|}\sum_{s\in S}\lambda(s^{-1})\xi(s)\bee_\xi
 =\begin{cases}
   \bee_\xi,&\xi|_S=\lambda,\\
   0,&\xi|_S\neq\lambda.
  \end{cases}
\]
Hence the Peirce component is zero if $\xi|_S\neq\lambda$.  If
$\xi|_S=\lambda$, the map $\mP\to\bee_\xi\mA_\theta(\mP)$ defined by
$p\mapsto\bee_\xi\otimes p$ is an isomorphism.  Indeed, every tensor has this form, and the inverse
$r\bee_\xi\otimes p\mapsto rp$ is balanced because
$\bee_\xi s=\lambda(s)\bee_\xi$.
\end{proof}

\draft{\par\smallskip\noindent\textbf{\Cref{prop:Peirce-induced}, tensor check.}
The ideal $\bee_\xi\RQ$ is one coordinate copy of $\R$, so every element is
uniquely $r\bee_\xi$.  Because $\bee_\lambda p=p$, balancing gives
$\bee_\xi\otimes p=\bee_\xi\bee_\lambda\otimes p$.  Directly,
\[
 \bee_\xi\bee_\lambda
 =|S|^{-1}\sum_s\lambda(s)^{-1}\xi(s)\bee_\xi.
\]
Orthogonality makes the scalar $1$ when $\xi|_S=\lambda$ and $0$ otherwise.

In the nonzero case every tensor is $r\bee_\xi\otimes p$, so the map from
$\mP$ is surjective.  Its inverse respects balancing because
\[
 r\bee_\xi s\otimes p\longmapsto r\xi(s)p
 \quad\text{and}\quad
 r\bee_\xi\otimes sp\longmapsto r\lambda(s)p,
\]
and these agree.  The two maps compose to the identity, proving injectivity
as well.\par\smallskip}

\begin{theorem}
\label{thm:Fourier-Peirce-formula}
Let $\mM$ be a $\G$-graded $(\RH,\RK)$-bimodule.  Then for every
$\xi=(\chi,\rho)\in\hQ$,
\[
 \bee_\chi \mM \bee_\rho
 \cong_\R
 \bigoplus_{\substack{\theta\in\Tset_{\H,\K}(\G)\\
                       \xi\in E_\theta}}
 \mP_\theta(\mM).
\]
\end{theorem}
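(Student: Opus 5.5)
The plan is to combine the classification decomposition \eqref{eq:classification-decomposition} of \cref{thm:classification} with \cref{prop:Peirce-induced}, using that the Peirce projection commutes with direct sums and with isomorphisms of bimodules.

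First, fix $\xi=(\chi,\rho)\in\hQ$. The assignment $\mM\mapsto\bee_\chi\mM\bee_\rho=\bee_\xi\mM$ is a functor from $(\RH,\RK)$-bimodules to $\R$-modules. It sends a bimodule isomorphism $f:\mM\to\mN$ to an $\R$-isomorphism $\bee_\xi\mM\to\bee_\xi\mN$, since $f(\bee_\xi m)=\bee_\xi f(m)$. It also commutes with arbitrary direct sums: $\bee_\xi$ acts componentwise on $\bigoplus_i\mM_i$, so $\bee_\xi\bigl(\bigoplus_i\mM_i\bigr)=\bigoplus_i\bee_\xi\mM_i$. Both facts use only that $\bee_\xi$ is an element of $\RQ$ and that bimodule maps are $\RQ$-linear. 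The grading plays no role beyond producing the decomposition, which is harmless because the statement is only an $\R$-module isomorphism.

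Second, apply this functor to the graded isomorphism
$\mM\cong_{\gr}\bigoplus_{\theta}\mA_\theta(\mP_\theta(\mM))$ from \cref{thm:classification}. This gives
\[
 \bee_\chi\mM\bee_\rho\cong_\R\bigoplus_{\theta\in\Tset_{\H,\K}(\G)}\bee_\chi\mA_\theta(\mP_\theta(\mM))\bee_\rho .
\]
Then apply \cref{prop:Peirce-induced} to each summand with $\mP=\mP_\theta(\mM)$. The summand is $\mP_\theta(\mM)$ when $\xi\in E_\theta$ and $0$ otherwise. Discarding the zero summands yields exactly the stated formula.

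The only point needing care is the passage to possibly infinite families of types: infinitely many double cosets may occur, and the $\mP_\theta$ need not be finitely generated. Since the decomposition in \cref{thm:classification} is an arbitrary direct sum and the idempotent $\bee_\xi$ preserves each summand, there is no real obstacle. I expect this bookkeeping, together with the compatibility of the Peirce projection with the isomorphism, to be the only step requiring comment. The sum on the right is indexed over the disjoint union $\Tset_{\H,\K}(\G)$, so distinct types with the same stabilizer and character are counted separately, as required.
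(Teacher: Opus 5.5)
Your proposal is correct and is essentially the paper's proof: apply $\bee_\xi=\bee_\chi\otimes\bee_\rho$ to the decomposition \eqref{eq:classification-decomposition} and invoke \cref{prop:Peirce-induced} on each summand. Your remarks on compatibility with the isomorphism and with arbitrary direct sums are exactly the bookkeeping the paper leaves implicit.
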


\begin{proof}
Apply $\bee_\xi$ to the decomposition
\eqref{eq:classification-decomposition} and use
\cref{prop:Peirce-induced} on every summand.
\end{proof}

\draft{\par\smallskip\noindent\textbf{\Cref{thm:Fourier-Peirce-formula}.}
Multiplication by $\bee_\xi$ commutes with a direct sum.  Applying it to the
classification therefore gives the direct sum of the $\xi$-components of all
induced summands.  \Cref{prop:Peirce-induced} identifies that
component with $\mP_\theta(\mM)$ exactly for $\xi\in E_\theta$, and with zero
otherwise.  Infinite type sets cause no problem because every element of a
module direct sum has finite support.\par\smallskip}

\begin{proposition}
\label{prop:extensions-intersection}
Let $S,T\leq \Q$, let $\lambda\in\widehat S$ and
$\mu\in\widehat T$, and put $U=S\cap T$.  Then
\[
 E(S,\lambda)\cap E(T,\mu)\neq\varnothing
 \quad\Longleftrightarrow\quad
 \lambda|_U=\mu|_U.
\]
If these conditions hold, there is a unique character
$\nu\in\widehat{ST}$ extending both $\lambda$ and $\mu$, and
\[
 E(S,\lambda)\cap E(T,\mu)=E(ST,\nu).
\]
Hence
\[
 |E(S,\lambda)\cap E(T,\mu)|
 =\begin{cases}
   [\Q:ST],&\lambda|_U=\mu|_U,\\
   0,&\text{otherwise.}
  \end{cases}
\]
\end{proposition}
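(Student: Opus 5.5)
The forward implication is immediate. If $\xi\in E(S,\lambda)\cap E(T,\mu)$, then $\lambda|_U=\xi|_U=\mu|_U$.

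For the converse, assume $\lambda|_U=\mu|_U$. I will first build $\nu$ on $ST$ and then extend it to $\Q$ using \cref{lem:split-stabilizers}. Since $\Q$ is abelian, $ST$ is a subgroup and every element has the form $st$ with $s\in S$, $t\in T$. Define
\[
 \nu(st)=\lambda(s)\mu(t).
\]
This is well defined: if $st=s't'$, then $s'^{-1}s=t't^{-1}\in U$, so $\lambda(s'^{-1}s)=\mu(t't^{-1})$. That gives $\lambda(s)\mu(t)=\lambda(s')\mu(t')$. Multiplicativity follows from commutativity of $\Q$. Clearly $\nu|_S=\lambda$ and $\nu|_T=\mu$. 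Uniqueness holds because any character of $ST$ is determined by its values on $S$ and on $T$, as $ST$ is generated by $S\cup T$. By \cref{lem:split-stabilizers} applied to the subgroup $ST\leq\Q$, the character $\nu$ has extensions to $\Q$. Any such extension lies in $E(S,\lambda)\cap E(T,\mu)$, so the intersection is nonempty.

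For the equality of sets, suppose $\xi\in E(ST,\nu)$. Restricting to $S$ and to $T$ gives $\xi\in E(S,\lambda)\cap E(T,\mu)$. Conversely, if $\xi$ lies in the intersection, then $\xi|_{ST}$ is a character of $ST$ extending both $\lambda$ and $\mu$. By uniqueness it equals $\nu$, so $\xi\in E(ST,\nu)$.

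The cardinality formula then follows from \cref{lem:split-stabilizers}: every character of $ST$ has exactly $[\Q:ST]$ extensions to $\Q$. When $\lambda|_U\neq\mu|_U$, the intersection is empty by the first part, so its cardinality is $0$.

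The only delicate points are the well-definedness of $\nu$ and the appeal to surjectivity of restriction over a general ring $\R$. The latter is supplied by \cref{lem:split-stabilizers}, because every subgroup of $\Q$, in particular $ST$, has split group algebra. No field hypothesis is needed.
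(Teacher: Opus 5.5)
Your proof is correct and takes the same route as the paper. You define $\nu(st)=\lambda(s)\mu(t)$ on $ST$, check well-definedness via $s'^{-1}s=t't^{-1}\in U$, extend to $\Q$ using \cref{lem:split-stabilizers}, and identify the common extensions with $E(ST,\nu)$ to obtain the count $[\Q:ST]$; your two-sided verification of that set equality just spells out what the paper asserts in one line.
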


\begin{proof}
A common extension of $\lambda$ and $\mu$ has equal restrictions to $U$.
Conversely, assume $\lambda|_U=\mu|_U$ and define
$\nu(st)=\lambda(s)\mu(t)$ for $s\in S$ and $t\in T$.
If $st=s't'$, then $s'^{-1}s=t't^{-1}\in U$, and equality of the restrictions
gives $\lambda(s)\mu(t)=\lambda(s')\mu(t')$.  Thus $\nu$ is well-defined.
Since $\Q$ is abelian, $\nu$ is a character of $ST$, and it is the unique
character extending both $\lambda$ and $\mu$.  By
\cref{lem:split-stabilizers}, $\nu$ extends to $\Q$.  The common extensions
of $\lambda$ and $\mu$ are therefore exactly $E(ST,\nu)$, which has
$[\Q:ST]$ elements.
\end{proof}

\draft{\par\smallskip\noindent\textbf{\Cref{prop:extensions-intersection}, expanded.}
A common extension gives equal restrictions on $U$.  Conversely,
$ST$ is a subgroup because $\Q$ is abelian.  If $st=s't'$, then
$u=s'^{-1}s=t't^{-1}\in U$, and
\[
 \lambda(s)\mu(t)
 =\lambda(s'u)\mu(u^{-1}t')
 =\lambda(s')\lambda(u)\mu(u)^{-1}\mu(t')
 =\lambda(s')\mu(t').
\]
This checks well-definedness of $\nu$.  Commutativity gives
\[
 \nu(sts't')=\lambda(ss')\mu(tt')=\nu(st)\nu(s't').
\]
Any common extension
must take $st$ to $\lambda(s)\mu(t)$, so $\nu$ is unique.
\Cref{lem:split-stabilizers} extends it to $\Q$ and counts the extensions.\par\smallskip}

\begin{corollary}
\label{cor:type-compatibility}
For types $\theta=(C,\lambda)$ and $\phi=(D,\mu)$,
\[
 E_\theta\cap E_\phi=\varnothing
 \quad\Longleftrightarrow\quad
 \lambda|_{\Sset_C\cap \Sset_D}\neq\mu|_{\Sset_C\cap \Sset_D}.
\]
In particular, if $\Sset_C\cap \Sset_D=1$, then $E_\theta\cap E_\phi\neq\varnothing$ for every pair of types based on $C$ and $D$.
\end{corollary}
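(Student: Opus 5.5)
This is a direct specialization of \cref{prop:extensions-intersection}, and I will apply it with $S=\Sset_C$, $T=\Sset_D$, $\lambda\in\widehat{\Sset_C}$ and $\mu\in\widehat{\Sset_D}$. By definition $E_\theta=E(\Sset_C,\lambda)$ and $E_\phi=E(\Sset_D,\mu)$. The proposition then says that $E_\theta\cap E_\phi\neq\varnothing$ if and only if $\lambda$ and $\mu$ agree on $U=\Sset_C\cap\Sset_D$. Negating both sides gives the displayed equivalence.

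The types are defined through the chosen representatives, $\Sset_C=\Sset_{g_C}$. By \cref{lem:stabilizer-basic} this group does not depend on the representative of $C$, so the statement is insensitive to that choice. The proposition applies to arbitrary subgroups of $\Q$, so the case $C=D$ needs no separate treatment. In that case $U=\Sset_C$, and the criterion reduces to $\lambda\neq\mu$.

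For the final sentence, suppose $\Sset_C\cap\Sset_D=1$. Every character restricts to the trivial character on the trivial group, so $\lambda|_U=\mu|_U$ holds automatically. The proposition then gives $E_\theta\cap E_\phi=E(\Sset_C\Sset_D,\nu)$, which is nonempty of size $[\Q:\Sset_C\Sset_D]$. In particular the intersection is nonempty for every choice of $\lambda$ and $\mu$.

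None of these steps is a real obstacle, since all the content sits in \cref{prop:extensions-intersection}. The only point needing care is that the hypotheses match: $\Sset_C$ and $\Sset_D$ really are subgroups of $\Q$, and $\lambda$ and $\mu$ are characters of them. Both facts hold by the definition of a type.
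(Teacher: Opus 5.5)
Your proposal is correct and matches the paper's proof: the equivalence is \cref{prop:extensions-intersection} applied with $S=\Sset_C$ and $T=\Sset_D$. The trivial-intersection case follows because all characters agree on the trivial subgroup.
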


\begin{proof}
The first statement is \cref{prop:extensions-intersection}.  If
$\Sset_C\cap \Sset_D=1$, all characters have the same restriction to the trivial
subgroup.
\end{proof}

\draft{\par\smallskip\noindent\textbf{\Cref{cor:type-compatibility}.}
Apply \cref{prop:extensions-intersection} with
$S=\Sset_C$ and $T=\Sset_D$.  The intersection of extension sets is empty
exactly when the restrictions disagree.  If $S\cap T=1$, the trivial group
has only its trivial character, so both restrictions necessarily agree and
the two extension sets meet.\par\smallskip}

\subsection{Finite Fourier transform}

If $Y\subseteq X$, write $\mathbf 1_Y\in\Fun(X,\R)$ for its characteristic
function:
\[
 \mathbf 1_Y(x)=\begin{cases}1_\R,&x\in Y,\\0,&x\notin Y.\end{cases}
\]
For $f\in\Fun(X,\R)$, its support is $\{x\in X:f(x)\neq0\}$.  We use the unnormalized Fourier transform
$\Fourier:\Fun(\hQ,\R)\to\Fun(\Q,\R)$ defined by
$\Fourier(f)(q)=\sum_{\xi\in\hQ}f(\xi)\xi(q)$.
By \eqref{eq:character-orthogonality}, the character table of $\Q$ is
invertible over $\R$.  Hence $\Fourier$ is an isomorphism, with inverse
\[
 \Fourier^{-1}(a)(\xi)
 =\frac{1}{|\Q|}\sum_{q\in \Q}a(q)\xi(q^{-1}).
\]

\draft{\par\smallskip\noindent\textbf{Checking the transform and its inverse.}
If functions are written as column vectors indexed by $\hQ$, then
$\Fourier$ is multiplication by the character table
$X=(\xi(q))_{q,\xi}$.  Character orthogonality says that the matrix with
entries $|\Q|^{-1}\xi(q^{-1})$ is its inverse.  This gives exactly the displayed inverse formula.  The factor $|\Q|^{-1}$ exists in $\R$ by
\Cref{lem:split-stabilizers}.  No field hypothesis is present here.\par\smallskip}

\begin{proposition}
\label{prop:Fourier-extensions}
For $S\leq \Q$ and $\lambda\in\widehat S$,
\[
 \Fourier(\mathbf 1_{E(S,\lambda)})
 =[\Q:S]\,\lambda\,\mathbf 1_S,
\]
where $\lambda\mathbf 1_S$ is extended by zero outside $S$.
\end{proposition}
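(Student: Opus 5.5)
We evaluate the transform pointwise at $q\in\Q$ and reduce to the orthogonality relation \eqref{eq:character-sum} on the quotient $\Q/S$. By \cref{lem:split-stabilizers}, $E(S,\lambda)$ is nonempty; fix one extension $\xi_0\in E(S,\lambda)$. Then $E(S,\lambda)=\xi_0\cdot\ker(\operatorname{res}^\Q_S)$. The kernel of restriction is the set of characters of $\Q$ trivial on $S$, that is, $\widehat{\Q/S}$ inflated to $\Q$. It has exactly $[\Q:S]$ elements, again by \cref{lem:split-stabilizers}. Hence
\[
 \Fourier(\mathbf 1_{E(S,\lambda)})(q)
 =\sum_{\xi\in E(S,\lambda)}\xi(q)
 =\xi_0(q)\sum_{\psi\in\widehat{\Q/S}}\psi(qS).
\]

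The remaining step is to evaluate $\sum_{\psi\in\widehat{\Q/S}}\psi(x)$ for $x=qS\in\Q/S$. This is a sum over characters rather than over group elements, so \eqref{eq:character-sum} does not apply verbatim. There are two ways to handle it.

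The first route uses the proof of \cref{lem:split-stabilizers}, which gives $\R(\Q/S)\cong\R^{[\Q:S]}$. So \eqref{eq:character-orthogonality} and the invertibility of the character table $X$ of $T=\Q/S$ hold. Since $X$ is square with inverse $|T|^{-1}(\beta(t^{-1}))$, the relation $X^{-1}X=I$ gives the dual orthogonality
\[
 \sum_{\beta}\beta(t^{-1})\beta(t')=|T|\,\delta_{t,t'}.
\]
Taking $t=1$ yields $\sum_\psi\psi(x)=[\Q:S]$ if $x=1$ and $0$ otherwise.

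The second route is to expand $\bee_{1}\in\R(\Q/S)$, the Fourier idempotent of the trivial element. This is the finite-ring analogue of $\delta_1=|T|^{-1}\sum_\psi\psi$.

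With the dual orthogonality in hand, $\Fourier(\mathbf 1_{E(S,\lambda)})(q)$ equals $[\Q:S]\,\xi_0(q)$ when $q\in S$ and $0$ otherwise. For $q\in S$ we have $\xi_0(q)=\lambda(q)$, which gives exactly $[\Q:S]\,\lambda\,\mathbf 1_S$. The main point needing care is the dual orthogonality over a general indecomposable ring, since $\R$ is not a field. It follows purely from the square matrix identity, because a one-sided inverse of a square matrix over a commutative ring is two-sided: the determinant is a unit.
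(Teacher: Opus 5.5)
Your proof is correct and is essentially the paper's. You fix an extension $\xi_0$, write $E(S,\lambda)$ as $\xi_0$ times the characters trivial on $S$ (identified with $\widehat{\Q/S}$), pull out $\xi_0(q)$, and evaluate the remaining sum by character orthogonality on $\Q/S$, using $\xi_0(q)=\lambda(q)$ on $S$. Your first route also spells out, via the two-sided inverse of the square character table, the column orthogonality $\sum_{\psi}\psi(x)=[\Q:S]\,\delta_{x,1}$ that the paper uses without comment; the second route is only sketched and not needed (its clean form is to compare coefficients in $\sum_{\psi}\bee_\psi=1$).
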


\begin{proof}
Choose an extension $\widetilde\lambda\in\hQ$.  Then
$E(S,\lambda)=\widetilde\lambda S^\perp$, where
$S^\perp=\{\eta\in\hQ:\eta|_S=1\}$.
Therefore
\[
 \Fourier(\mathbf 1_{E(S,\lambda)})(q)
 =\widetilde\lambda(q)
   \sum_{\eta\in S^\perp}\eta(q).
\]
The map $\eta\mapsto(qS\mapsto\eta(q))$ identifies $S^\perp$ with
$\widehat{\Q/S}$.  Hence character orthogonality in $\Q/S$ gives
\[
 \sum_{\eta\in S^\perp}\eta(q)
 =\begin{cases}
   [\Q:S],&q\in S,\\
   0,&q\notin S.
  \end{cases}
\]
For $q\in S$, $\widetilde\lambda(q)=\lambda(q)$, which proves the formula.
\end{proof}

\draft{\par\smallskip\noindent\textbf{\Cref{prop:Fourier-extensions}, expanded.}
Fix one extension $\widetilde\lambda$.  Every other extension differs from
it by a character trivial on $S$, and conversely multiplying by such a
character does not change the restriction.  Hence
$E(S,\lambda)=\widetilde\lambda S^\perp$.

A character $\eta\in S^\perp$ is constant on cosets of $S$, so it defines a
character of $\Q/S$.  Every quotient character inflates in this way.  Thus
$S^\perp\cong\widehat{\Q/S}$.  The sum
$\sum_{\eta\in S^\perp}\eta(q)$ is the full character sum of $\Q/S$ at the
coset $qS$.  It is $[\Q:S]$ when $qS=S$, namely $q\in S$, and zero otherwise.
Multiplication by $\widetilde\lambda(q)$ gives the stated formula, and on
$S$ this scalar is $\lambda(q)$.\par\smallskip}

For $S\leq \Q$, let
\[
 \mV_S
 =\Span_\R
  \{\mathbf 1_{E(S,\lambda)}:\lambda\in\widehat S\}
 \subseteq\Fun(\hQ,\R).
\]
For any subset $X\subseteq \Q$, let $\Fun_X(\Q,\R)$ denote the
functions on $\Q$ supported in $X$.

\begin{theorem}
\label{thm:Fourier-VS}
For every subgroup $S\leq \Q$,
\[
 \Fourier(\mV_S)=\Fun_S(\Q,\R).
\]
Consequently, for subgroups $S,T\leq \Q$,
\begin{equation}
 \mV_S\cap\mV_T=\mV_{S\cap T},
 \label{eq:VS-intersection}
\end{equation}
and for any finite family $\Sset$ of subgroups of $\Q$,
\begin{equation}
 \Fourier\left(\sum_{S\in\Sset}\mV_S\right)
 =\Fun_{\cup_{S\in\Sset}S}(\Q,\R).
 \label{eq:VS-sum}
\end{equation}
In particular,
\begin{equation}
 \operatorname{rank}_\R\left(\sum_{S\in\Sset}\mV_S\right)
 =\left|\bigcup_{S\in\Sset}S\right|.
 \label{eq:VS-rank}
\end{equation}
\end{theorem}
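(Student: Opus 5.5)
The plan is to prove the first identity $\Fourier(\mV_S)=\Fun_S(\Q,\R)$ directly from \cref{prop:Fourier-extensions}, and then deduce the remaining statements by transporting them through the isomorphism $\Fourier$. By that proposition, $\Fourier(\mathbf 1_{E(S,\lambda)})=[\Q:S]\,\lambda\mathbf 1_S$. Since $[\Q:S]$ divides $|\Q|$, which is a unit in $\R$ by \cref{lem:split-stabilizers}, we get
\[
 \Fourier(\mV_S)=\Span_\R\{\lambda\mathbf 1_S:\lambda\in\widehat S\}.
\]
The right side is the image of $\Fun(S,\R)$ under extension by zero. By \cref{lem:split-stabilizers} and the discussion after \eqref{eq:character-orthogonality}, applied to $T=S$, the characters of $S$ form an $\R$-basis of $\Fun(S,\R)$. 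Hence this span is exactly $\Fun_S(\Q,\R)$.

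For \eqref{eq:VS-intersection}, note that $\Fourier$ is a bijective $\R$-linear map, so it commutes with intersections of submodules. It therefore suffices to check
\[
 \Fun_S(\Q,\R)\cap\Fun_T(\Q,\R)=\Fun_{S\cap T}(\Q,\R),
\]
which is immediate: a function supported in both $S$ and $T$ is supported in $S\cap T$.

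For \eqref{eq:VS-sum}, linearity gives
\[
 \Fourier\Bigl(\sum_{S}\mV_S\Bigr)=\sum_S\Fun_S(\Q,\R).
\]
The inclusion of this sum into $\Fun_{\cup S}(\Q,\R)$ is clear. Conversely, each indicator $\mathbf 1_{\{q\}}$ with $q\in\bigcup_S S$ lies in some $\Fun_S(\Q,\R)$, and these indicators span $\Fun_{\cup S}(\Q,\R)$ because $\Q$ is finite. This gives equality.

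For \eqref{eq:VS-rank}, $\Fourier$ is an isomorphism of $\R$-modules, so $\sum_S\mV_S$ is free with the same rank as $\Fun_{\cup S}(\Q,\R)\cong\R^{|\cup S|}$. Rank is well defined because $\R$ is a nonzero commutative ring.

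The only step needing care is the first one. There, the scalar $[\Q:S]$ must be invertible, and the characters $\lambda\in\widehat S$ must span all of $\Fun(S,\R)$ over a ring that need not be a field. Both facts are already supplied by \cref{lem:split-stabilizers} and the invertibility of the character table, so no real obstacle is expected.
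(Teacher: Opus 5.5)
Your proposal is correct and follows the paper's proof essentially step for step: the first identity from \cref{prop:Fourier-extensions}, the invertibility of $[\Q:S]$ and the character basis of $\Fun(S,\R)$; then the intersection, sum and rank statements transported through the bijection $\Fourier$. The only cosmetic difference is in the sum formula, where you span $\Fun_{\cup S}(\Q,\R)$ by the point indicators $\mathbf 1_{\{q\}}$ and the paper instead splits $f$ into the pieces $f\mathbf 1_{S_i\setminus(S_1\cup\cdots\cup S_{i-1})}$.
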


\begin{proof}
By \cref{prop:Fourier-extensions}, $\Fourier(\mV_S)$ is spanned by
$[\Q:S]\lambda\mathbf 1_S$, $\lambda\in\widehat S$.  Since $[\Q:S]\in\uR$
and the characters of $S$ form an $\R$-basis of $\Fun(S,\R)$, this is
$\Fun_S(\Q,\R)$.  Therefore
\[
 \Fourier(\mV_S\cap\mV_T)
 =\Fun_S(\Q,\R)\cap\Fun_T(\Q,\R)
 =\Fun_{S\cap T}(\Q,\R),
\]
which gives \eqref{eq:VS-intersection}.  For a finite family
$\Sset=\{S_1,\ldots,S_r\}$ and a function $f$ supported on their union, put
$f_i=f\mathbf 1_{S_i\setminus(S_1\cup\cdots\cup S_{i-1})}$.  Then
$f=f_1+\cdots+f_r$ and each $f_i$ is supported on $S_i$.  Hence
\[
 \sum_{S\in\Sset}\Fun_S(\Q,\R)
 =\Fun_{\cup_{S\in\Sset}S}(\Q,\R).
\]
Applying $\Fourier^{-1}$ gives \eqref{eq:VS-sum}.  The module on the
right is free with basis indexed by $\bigcup_{S\in\Sset}S$, proving
\eqref{eq:VS-rank}.
\end{proof}

\draft{\par\smallskip\noindent\textbf{\Cref{thm:Fourier-VS}, expanded.}
\Cref{prop:Fourier-extensions} sends the generators of $\mV_S$ to
$[\Q:S]\lambda\mathbf1_S$.  The scalar $[\Q:S]$ is a unit and the characters
$\lambda$ form a basis of all functions on $S$, so these images span exactly
the functions supported on $S$.

Because $\Fourier$ is injective and surjective, it preserves intersections:
$\Fourier(U\cap V)=\Fourier(U)\cap\Fourier(V)$.  A function is supported in
both $S$ and $T$ exactly when it is supported in $S\cap T$, proving the
intersection formula.

For the sum formula, the successive differences in the proof assign every
point of the union to the first subgroup containing it.  Thus every supported
function is written as a sum of functions supported in individual subgroups.
Finally $\Fun_X(\Q,\R)$ is free with the delta functions at points of $X$ as
a basis, so its rank is $|X|$.\par\smallskip}

\begin{proposition}
\label{prop:extension-partitions}
Let $S,T\leq \Q$, put $U=S\cap T$, and let $\nu\in\widehat U$.  Then
\[
 \sum_{\substack{\lambda\in\widehat S\\\lambda|_U=\nu}}
   \mathbf 1_{E(S,\lambda)}
 =\mathbf 1_{E(U,\nu)}
 =\sum_{\substack{\mu\in\widehat T\\\mu|_U=\nu}}
   \mathbf 1_{E(T,\mu)}.
\]
Both sums are disjoint partitions of $E(U,\nu)$.
\end{proposition}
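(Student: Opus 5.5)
By symmetry in $S$ and $T$, it suffices to prove the left equality and the partition claim for $S$; the statement for $T$ follows by interchanging the roles of $S$ and $T$. I will argue directly at the level of sets, since an identity between sums of characteristic functions with values in $\R$ follows from a set-theoretic disjoint-union statement.

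\textbf{Step 1: disjointness.} For distinct $\lambda\neq\lambda'$ in $\widehat S$, the sets $E(S,\lambda)$ and $E(S,\lambda')$ are disjoint. An element $\xi\in\hQ$ has a single restriction $\xi|_S$, so it cannot extend two different characters of $S$. Equivalently, this is \cref{prop:extensions-intersection} with $T=S$, $U=S$.

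\textbf{Step 2: each set lies in $E(U,\nu)$.} If $\lambda|_U=\nu$ and $\xi\in E(S,\lambda)$, then
\[
 \xi|_U=(\xi|_S)|_U=\lambda|_U=\nu,
\]
so $E(S,\lambda)\subseteq E(U,\nu)$.

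\textbf{Step 3: the sets cover $E(U,\nu)$.} Let $\xi\in E(U,\nu)$ and put $\lambda=\xi|_S\in\widehat S$. Then $\lambda|_U=\xi|_U=\nu$ and $\xi\in E(S,\lambda)$. Hence
\[
 E(U,\nu)=\coprod_{\lambda\in\widehat S,\ \lambda|_U=\nu}E(S,\lambda).
\]
Taking characteristic functions of this disjoint union gives the left equality in $\Fun(\hQ,\R)$.

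\textbf{Optional count check.} By \cref{lem:split-stabilizers}, applied with $S$ in place of $\Q$ (every subgroup of $S$ is a subgroup of $\Q$, and $\R S$ is split), the fiber of $\widehat S\to\widehat U$ over $\nu$ has $[S:U]$ elements. Each set $E(S,\lambda)$ has $[\Q:S]$ elements, so the union has $[\Q:U]=|E(U,\nu)|$ elements, consistent with the above.

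There is no real obstacle here. The only point needing care is that Step 3 uses the restriction $\xi|_S$ itself rather than any existence result, so surjectivity of restriction is not needed for the partition. Nonemptiness of the individual pieces is also not required by the statement.
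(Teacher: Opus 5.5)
Your proposal is correct and follows essentially the paper's argument: the paper also works pointwise, noting that for each $\xi\in\hQ$ the only possible contributing term is $\lambda=\xi|_S$, and that this term occurs exactly when $\xi|_U=\nu$. Your Steps 1--3 unpack this same observation into disjointness, containment and covering, and the optional count check is not needed for the statement, though it is consistent with \cref{lem:split-stabilizers}.
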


\begin{proof}
For $\xi\in\hQ$, the only possible term on the left is
$\lambda=\xi|_S$, and it occurs exactly when $\xi|_U=\nu$.  Hence the left sum
is $\mathbf 1_{E(U,\nu)}$.  The same argument applies to $T$.
\end{proof}

\draft{\par\smallskip\noindent\textbf{\Cref{prop:extension-partitions}, pointwise check.}
Fix $\xi\in\hQ$.  Among all characters of $S$, only
$\lambda=\xi|_S$ can satisfy $\xi\in E(S,\lambda)$.  This term is included
in the sum exactly when $\xi|_U=\nu$.  Hence the left side takes the value $1$ exactly on
$E(U,\nu)$ and $0$ elsewhere.  Since at most one term occurs at each point,
the sum is a disjoint partition.  The argument for $T$ is identical.\par\smallskip}

\begin{corollary}
\label{cor:same-Peirce}
Suppose $C,D\in \H\backslash \G/\K$ have stabilizers $S$ and $T$, respectively,
and put $U=S\cap T$.  For $\nu\in\widehat U$, the two graded bimodules
\[
 \displaystyle\bigoplus_{\substack{\lambda\in\widehat S\\\lambda|_U=\nu}}
 \mA_{(C,\lambda)} \quad\text{and}\quad
 \displaystyle\bigoplus_{\substack{\mu\in\widehat T\\\mu|_U=\nu}}
 \mA_{(D,\mu)}
\]
have isomorphic Peirce components for every $(\chi,\rho)$.  In particular their
underlying $(\RH,\RK)$-bimodules are isomorphic.  If $C\neq D$, they are not isomorphic as
graded bimodules.
\end{corollary}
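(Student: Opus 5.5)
Denote the two bimodules by $\mM_S=\bigoplus_{\lambda|_U=\nu}\mA_{(C,\lambda)}$ and $\mM_T=\bigoplus_{\mu|_U=\nu}\mA_{(D,\mu)}$. The plan has three steps: compute their Peirce components with \cref{thm:Fourier-Peirce-formula}, assemble those components into an ungraded bimodule isomorphism, and rule out a graded isomorphism with \cref{thm:classification}.

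First, the Peirce components. Every nonzero coefficient module of $\mM_S$ is a single copy of $\R$, attached to a type $(C,\lambda)$ with $\lambda|_U=\nu$. \Cref{thm:Fourier-Peirce-formula} then gives
\[
 \bee_\chi\mM_S\bee_\rho\cong_\R\R^{\,n_S(\xi)},\qquad
 n_S(\xi)=\sum_{\lambda|_U=\nu}\mathbf 1_{E(S,\lambda)}(\xi),
\]
where $\xi=(\chi,\rho)$, and similarly for $\mM_T$. By \cref{prop:extension-partitions} both counting functions equal $\mathbf 1_{E(U,\nu)}$. So each Peirce component of either bimodule is $\R$ when $\xi\in E(U,\nu)$ and $0$ otherwise. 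The proposition asserts that the sums are disjoint partitions, so these values are genuinely $0$ or $1$ and there is no need to compare integer sums in $\R$.

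Second, the ungraded isomorphism. Every bimodule is the direct sum of its Peirce components. On the $\xi$-component, each $q\in\Q$ acts by the scalar $\xi(q)$, as noted before \cref{prop:Peirce-induced}. Hence any family of $\R$-isomorphisms $\bee_\chi\mM_S\bee_\rho\to\bee_\chi\mM_T\bee_\rho$, one for each $\xi$, is automatically $\RQ$-linear. Their direct sum is therefore an isomorphism of $(\RH,\RK)$-bimodules $\mM_S\cong\mM_T$.

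Third, the graded non-isomorphism when $C\neq D$. The coefficient modules of $\mM_S$ are nonzero only at types based on $C$, and there is at least one such type: since restriction $\widehat S\to\widehat U$ is surjective by \cref{lem:split-stabilizers}, some $\lambda$ satisfies $\lambda|_U=\nu$. For that $\lambda$, $\mP_{(C,\lambda)}(\mM_S)\cong\R$, while $\mP_{(C,\lambda)}(\mM_T)=0$ because $\mM_T$ is supported on $D$. By \eqref{eq:isomorphism-criterion}, $\mM_S\not\cong_{\gr}\mM_T$. A simpler way to say the same thing is that the two supports, $C$ and $D$, are disjoint and nonempty.

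I expect the only delicate point to be the second step. The Peirce decomposition is not graded, so the isomorphism has to be built component by component in $\xi$, and its $\RQ$-linearity follows from the scalar action on each component rather than from any grading argument.
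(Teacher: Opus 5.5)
Your proposal is correct and follows the paper's proof essentially step for step. The Peirce components come from \cref{thm:Fourier-Peirce-formula} together with \cref{prop:extension-partitions}, the ungraded isomorphism is assembled from $\R$-isomorphisms on the $\xi$-components (on which $\Q$ acts by scalars), and graded non-isomorphism follows from the disjoint supports $C$ and $D$; your extra check that some $\lambda\in\widehat S$ restricts to $\nu$, so that both supports are nonempty, makes explicit a point the paper leaves implicit.
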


\begin{proof}
By \cref{prop:extension-partitions,thm:Fourier-Peirce-formula}, both bimodules
have Peirce component $\R$ for $\xi\in E(U,\nu)$ and zero otherwise.  Since every ungraded bimodule is the direct sum of its Peirce components and
the $\Q$-action on the $\xi$-component is scalar through $\xi$, choose an
$\R$-linear isomorphism on each nonzero matching component and take their
direct sum.  The preceding observation shows that this direct sum is an
$(\RH,\RK)$-bimodule isomorphism.  If $C\neq D$, their
graded supports are the distinct double cosets $C$ and $D$, so they are not
graded isomorphic.
\end{proof}

\draft{\par\smallskip\noindent\textbf{\Cref{cor:same-Peirce}, why ungraded isomorphism follows.}
The partition identity says pointwise that each displayed bimodule has one
copy of $\R$ in the $\xi$-Peirce component for
$\xi\in E(U,\nu)$ and zero elsewhere.  Choose an $\R$-module isomorphism
between each pair of matching copies.  Since $q\in\Q$ acts on both copies by
the same scalar $\xi(q)$, this map is automatically $\RQ$-linear.  Their
finite direct sum is therefore an ungraded bimodule isomorphism.  It need not
preserve the $\G$-grading: when $C\neq D$, the two supports are disjoint
double cosets, and strict graded isomorphism is impossible.\par\smallskip}

Suppose that $\mM$ is a finite direct sum of atomic bimodules and write
\[
 \mP_{C,\lambda}(\mM)\cong \R^{m_{C,\lambda}}.
\]
The Peirce multiplicity function is integer-valued.  To apply the
$\R$-valued Fourier transform, for $\xi=(\chi,\rho)$ let
\[
 \overline n_{\mM}(\xi)
 =\operatorname{rank}_\R(\bee_\chi \mM \bee_\rho)\,1_\R
 \in\R
\]
be its image under $\mathbb Z\to\R$.  Then
\cref{thm:Fourier-Peirce-formula} gives
\[
 \overline n_{\mM}
 =\sum_{C,\lambda}
   (m_{C,\lambda}1_\R)\mathbf 1_{E(\Sset_C,\lambda)}.
\]
Applying \cref{prop:Fourier-extensions} gives
\begin{equation}
 \Fourier(\overline n_{\mM})(q)
 =\sum_{\substack{C\in \H\backslash \G/\K\\q\in \Sset_C}}
  [\Q:\Sset_C]
  \sum_{\lambda\in\widehat{\Sset_C}}
       (m_{C,\lambda}1_\R)\lambda(q).
 \label{eq:Peirce-multiplicity-Fourier}
\end{equation}
Thus every summand on the right of \eqref{eq:Peirce-multiplicity-Fourier} is
supported on a stabilizer $\Sset_C$.  In positive characteristic this Fourier
formula concerns the images of the integer multiplicities in $\R$.  It does not,
by itself, recover the integer multiplicities.

\draft{\par\smallskip\noindent\textbf{The multiplicity formula and positive characteristic.}
For an atomic summand of type $(C,\lambda)$, \cref{prop:Peirce-induced}
contributes one free copy of $\R$ at each $\xi\in E(\Sset_C,\lambda)$ and
nothing elsewhere.  With $m_{C,\lambda}$ copies, ranks add to the integer
function
\[
 n_{\mM}(\xi)=\sum_{C,\lambda}
 m_{C,\lambda}\mathbf1_{E(\Sset_C,\lambda)}(\xi).
\]
The Fourier transform in this section accepts values in $\R$, so the black
text first applies the canonical map $\mathbb Z\to\R$.  After that map,
linearity and \cref{prop:Fourier-extensions} give
\eqref{eq:Peirce-multiplicity-Fourier} term by term.  If $\R$ has
characteristic $p$, integers differing by a multiple of $p$ have the same
image in $\R$.  Hence the transformed $\R$-valued function cannot in general
recover the original integer ranks.\par\smallskip}

\section{Multiplicity-free graded bimodules}
\label{sec:multiplicity-free}

\begin{definition}
A finitely generated graded bimodule $\mM$ is called \emph{multiplicity-free}
if every nonzero Peirce component
$\bee_\chi\mM\bee_\rho$ is free of rank one over $\R$.
\end{definition}

\begin{theorem}
\label{thm:multiplicity-free}
A finitely generated graded bimodule $\mM$ is multiplicity-free if and only if
there is a finite set of types $\Theta\subseteq\Tset_{\H,\K}(\G)$ such that
\begin{equation}
 \mM\cong_{\gr}\bigoplus_{\theta\in\Theta}\mA_\theta
 \label{eq:multiplicity-free-decomposition}
\end{equation}
and, for distinct $\theta,\phi\in\Theta$,
\begin{equation}
 E_\theta\cap E_\phi=\varnothing.
 \label{eq:multiplicity-free-disjointness}
\end{equation}
\end{theorem}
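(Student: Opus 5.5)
The plan is to reduce everything to \cref{thm:Fourier-Peirce-formula}, which expresses each Peirce component as a direct sum of coefficient modules:
\[
 \bee_\chi\mM\bee_\rho\cong_\R\bigoplus_{\theta:\ \xi\in E_\theta}\mP_\theta(\mM).
\]
Since $\mM$ is finitely generated, \cref{cor:finite-generation} shows that only finitely many $\mP_\theta(\mM)$ are nonzero, and each is finitely generated. Write $\Theta=\{\theta:\mP_\theta(\mM)\neq0\}$. This set is finite.

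For the converse direction, assume \eqref{eq:multiplicity-free-decomposition} and \eqref{eq:multiplicity-free-disjointness}. By \cref{thm:classification}, the coefficient modules are $\mP_\theta(\mM)\cong\R$ for $\theta\in\Theta$ and zero otherwise. For each $\xi$, disjointness means at most one $\theta\in\Theta$ has $\xi\in E_\theta$. The Peirce formula then gives $\bee_\chi\mM\bee_\rho\cong\R$ or $0$, so $\mM$ is multiplicity-free. The module $\mM$ is finitely generated because $\Theta$ is finite and each $\mA_\theta$ is finite free over $\R$.

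For the forward direction, assume $\mM$ is multiplicity-free and take $\Theta$ as above. Each $E_\theta$ is nonempty by \cref{lem:split-stabilizers}. For $\theta\in\Theta$, pick $\xi\in E_\theta$. Then the $\xi$-Peirce component is nonzero, since it contains the nonzero direct summand $\mP_\theta(\mM)$, and it is therefore free of rank one. The key step is to show that a decomposition $\R\cong\bigoplus_i P_i$ into $\R$-modules has exactly one nonzero summand, and that this summand is isomorphic to $\R$.
\begin{itemize}
\item The decomposition corresponds to a family of orthogonal idempotents of $\operatorname{End}_\R(\R)=\R$ summing to $1$.
\item Since $\R$ is indecomposable, each of these idempotents is $0$ or $1$.
\item Hence exactly one summand is nonzero, and that summand is all of $\R$.
\end{itemize}

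Applying this at each $\xi\in E_\theta$ with $\theta\in\Theta$ gives two conclusions. First, $\mP_\theta(\mM)\cong\R$. Second, no other $\phi\in\Theta$ has $\xi\in E_\phi$, which is exactly \eqref{eq:multiplicity-free-disjointness}. \cref{thm:classification} then yields $\mM\cong_{\gr}\bigoplus_{\theta\in\Theta}\mA_\theta(\R)=\bigoplus_{\theta\in\Theta}\mA_\theta$.

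The only step that is not bookkeeping is the indecomposability step. It needs care because $\R$ is not assumed to be local or a field, so rank arguments are unavailable. The idempotent argument avoids rank entirely.
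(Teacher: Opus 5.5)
Your proposal is correct and follows the paper's proof essentially step for step. Both directions run through \cref{thm:Fourier-Peirce-formula} and \cref{cor:finite-generation}. The key step, that a direct-sum decomposition of $\R$ has exactly one nonzero summand and that summand is $\R$, is the same indecomposability argument: you phrase it through orthogonal idempotents in $\operatorname{End}_\R(\R)=\R$, while the paper phrases it through direct summands of $\R$ having the form $e\R$. Your added remarks, that each $E_\theta$ is nonempty and that $\bigoplus_{\theta\in\Theta}\mA_\theta$ is finitely generated, supply small details the paper leaves implicit.
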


\begin{proof}
Assume $\mM$ is multiplicity-free and let
$0\neq\mP_\theta(\mM)$, where $\theta=(C,\lambda)$.  Choose
$\xi\in E_\theta$.  By \cref{thm:Fourier-Peirce-formula},
$\mP_\theta(\mM)$ is an $\R$-direct summand of the nonzero Peirce component
$\bee_\xi\mM$.  This component is isomorphic to $\R$.  If $\R=N\oplus N'$ as $\R$-modules and $p:\R\to N$ is the projection, then
$e=p(1)$ is idempotent and $N=e\R$.  Thus direct summands of $\R$ are exactly
ideals of the form $e\R$.  Since $\R$ is indecomposable, a nonzero direct summand is $\R$ itself and its complement is
zero.  Hence $\mP_\theta(\mM)\cong\R$, and no other nonzero
$\mP_\phi(\mM)$ can satisfy $\xi\in E_\phi$.  Since $\mM$ is finitely
generated, only finitely many $\mP_\theta(\mM)$ are nonzero by
\cref{cor:finite-generation}.  This proves
\eqref{eq:multiplicity-free-decomposition} and \eqref{eq:multiplicity-free-disjointness}.

Conversely, if \eqref{eq:multiplicity-free-decomposition} holds and the sets $E_\theta$
are pairwise disjoint, \cref{thm:Fourier-Peirce-formula} shows that every
Peirce component is either zero or isomorphic to $\R$.  It follows from
\cref{cor:type-compatibility} that \eqref{eq:multiplicity-free-disjointness}
holds precisely when, for distinct $\theta=(C,\lambda)$ and $\phi=(D,\mu)$ in $\Theta$,
\[
 \lambda|_{\Sset_C\cap \Sset_D}\neq\mu|_{\Sset_C\cap \Sset_D}.
\]
\end{proof}

\draft{\par\smallskip\noindent\textbf{\Cref{thm:multiplicity-free}, expanded.}
Suppose $\mP_\theta(\mM)\neq0$.  An extension
$\xi\in E_\theta$ exists.  \Cref{thm:Fourier-Peirce-formula} places this coefficient module
as a direct summand of $\bee_\xi\mM\cong\R$.

If $\R=N\oplus N'$ and $p:\R\to N$ is the projection, put $e=p(1)$.  Then
$p(r)=re$, so $N=e\R$, and $p^2=p$ gives $e^2=e$.  Indecomposability forces
$e=0$ or $1$.  A nonzero direct summand is therefore all of $\R$, and its
complement is zero.  Hence $\mP_\theta(\mM)\cong\R$, and no second nonzero
coefficient module can contribute to the same $\xi$.  This is pairwise
disjointness of the $E_\theta$.  Finite generation makes the set of nonzero
types finite.

Conversely, disjointness means that a fixed Peirce component contains at most
one coefficient module, and each such module is $\R$.  Thus it is zero or
free of rank one.  The zero bimodule corresponds to $\Theta=\varnothing$.
The restriction condition is exactly \cref{cor:type-compatibility}.\par\smallskip}

When all stabilizers coincide, this recovers the distinct-character condition
of \cite[Theorem~2]{SSY2020}.  In general the characters must differ on the
corresponding stabilizer intersections.

\subsection{Gradings on a prescribed multiplicity-free bimodule}

For $\xi=(\chi,\rho)\in\hQ$, let $\R_\xi$ denote the rank-one ungraded
$(\RH,\RK)$-bimodule $\R$ with
\[
 h\cdot r\cdot k=\chi(h)\rho(k)r.
\]
If $B\subseteq\hQ$, put $\mM_B=\bigoplus_{\xi\in B}\R_\xi$.

\draft{\par\smallskip\noindent\textbf{The prescribed ungraded bimodule.}
The module $\R_\xi$ has one nonzero Peirce component: its $\xi$-component is
$\R$.  Indeed $\bee_\alpha$ acts as $1$ exactly when $\alpha=\chi$, and the
same holds on the right for $\rho$.  Therefore $\mM_B$ has Peirce component
$\R$ at every $\xi\in B$ and zero elsewhere.\par\smallskip}

\begin{theorem}
\label{thm:exact-cover}
Graded isomorphism classes of multiplicity-free $\G$-gradings on the
underlying $(\RH,\RK)$-bimodule $\mM_B$ are in bijection with finite subsets
$\Theta\subseteq\Tset_{\H,\K}(\G)$ satisfying
\begin{equation}
 B=\mathop{\dot\bigcup}_{\theta\in\Theta}E_\theta.
 \label{eq:exact-cover}
\end{equation}
The double coset is part of the type: if two distinct types have the same set $E_\theta$, they still determine nonisomorphic graded atomic bimodules.
\end{theorem}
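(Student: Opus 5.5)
The plan is to construct the map $\Theta\mapsto\bigoplus_{\theta\in\Theta}\mA_\theta$ and show that it is a well-defined bijection onto graded isomorphism classes. First, interpret a ``multiplicity-free $\G$-grading on the underlying bimodule $\mM_B$'' as a graded bimodule $\mM$ which is multiplicity-free and whose underlying $(\RH,\RK)$-bimodule is isomorphic to $\mM_B$. Two such gradings are identified when they are graded isomorphic in the strict sense. Given $\Theta$ satisfying \eqref{eq:exact-cover}, the sets $E_\theta$ are pairwise disjoint, so \cref{thm:multiplicity-free} shows that $\mM_\Theta=\bigoplus_{\theta\in\Theta}\mA_\theta$ is multiplicity-free. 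By \cref{thm:Fourier-Peirce-formula}, its $\xi$-Peirce component is $\R$ for $\xi\in\bigcup E_\theta=B$ and $0$ otherwise.

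The second step is to show that this $\mM_\Theta$ is ungradedly isomorphic to $\mM_B$. The argument is the one in the proof of \cref{cor:same-Peirce}. Every bimodule is the direct sum of its Peirce components, and $\Q$ acts on the $\xi$-component by the scalar $\xi$. Therefore any choice of $\R$-isomorphisms between matching rank-one components assembles into an $(\RH,\RK)$-bimodule isomorphism $\mM_\Theta\to\mM_B$. Transporting the grading along this isomorphism gives a grading on $\mM_B$ itself.

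Conversely, let $\mM$ be a multiplicity-free grading whose underlying bimodule is $\mM_B$. \Cref{thm:multiplicity-free} provides a finite $\Theta$ with $\mM\cong_{\gr}\bigoplus_\Theta\mA_\theta$ and pairwise disjoint $E_\theta$. Comparing the Peirce supports through \cref{thm:Fourier-Peirce-formula} shows that $\mM$ has nonzero $\xi$-component exactly for $\xi\in\bigcup E_\theta$. The underlying bimodule is $\mM_B$, so this support is $B$, and \eqref{eq:exact-cover} holds. This $\Theta$ is determined by the graded isomorphism class: by \eqref{eq:isomorphism-criterion}, $\Theta=\{\theta:\mP_\theta(\mM)\neq0\}$ is a graded invariant. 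Injectivity follows in the same way, since distinct sets $\Theta\neq\Theta'$ give different families of coefficient modules $\mP_\theta$ (one is $\R$ where the other is $0$), hence non-isomorphic graded bimodules. The two constructions are mutually inverse by the decomposition \eqref{eq:classification-decomposition}.

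The final sentence of the theorem follows from \eqref{eq:Hom-induced} or from supports. If $\theta=(C,\lambda)$ and $\phi=(D,\mu)$ are distinct, every graded homomorphism $\mA_\theta\to\mA_\phi$ is zero, so the two atomic bimodules are not graded isomorphic even when $E_\theta=E_\phi$; this can happen when $C\neq D$ but $\Sset_C=\Sset_D$ and $\lambda=\mu$. The main obstacle is bookkeeping rather than mathematics: the notion of a ``grading on $\mM_B$'' has to be made precise up to strict graded isomorphism. In particular, the bijection must be with isomorphism classes of graded objects whose underlying bimodule is $\mM_B$, and not with literal gradings of the fixed module $\mM_B$. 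Once that is set, everything reduces to \cref{thm:multiplicity-free,thm:Fourier-Peirce-formula} and \eqref{eq:isomorphism-criterion}.
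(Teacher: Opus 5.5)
Your proposal is correct and follows essentially the same route as the paper. You reduce to $\bigoplus_{\theta\in\Theta}\mA_\theta$ with pairwise disjoint $E_\theta$ via the multiplicity-free theorem, read off the Peirce support from the Fourier--Peirce formula, match it with $\mM_B$ through the Peirce decomposition and transport the grading, and invoke the isomorphism criterion of the classification theorem both for injectivity and for the final sentence, merely spelling out the well-definedness and injectivity steps that the paper compresses into its closing line.
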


\begin{proof}
By \cref{thm:multiplicity-free}, a multiplicity-free graded bimodule is
$\bigoplus_{\theta\in\Theta}\mA_\theta$ with the sets $E_\theta$ pairwise
disjoint.  By \cref{thm:Fourier-Peirce-formula}, its underlying ungraded
bimodule has nonzero Peirce components precisely at
$\bigcup_{\theta\in\Theta}E_\theta$, each free of rank one.  The Peirce decomposition displayed at the beginning of
\Cref{sec:Fourier-Peirce} therefore shows that it is isomorphic to
$\mM_B$ exactly when \eqref{eq:exact-cover} holds.  If
$f:\mM_B\to\bigoplus_{\theta\in\Theta}\mA_\theta$ is such an ungraded
bimodule isomorphism, define
$(\mM_B)^g=f^{-1}((\bigoplus_\theta\mA_\theta)^g)$.  This transports the grading
to $\mM_B$.  Finally, \cref{thm:classification} shows that two such
gradings are graded isomorphic exactly when their sets of types coincide, even
when distinct types determine the same set $E_\theta$.
\end{proof}

\draft{\par\smallskip\noindent\textbf{\Cref{thm:exact-cover}, expanded.}
Each atomic summand of type $\theta$ contributes one copy of $\R$ precisely
at the Peirce indices in $E_\theta$.  Pairwise disjointness prevents a rank
larger than one.  Hence the underlying bimodule has the same Peirce
decomposition as $\mM_B$ exactly when the disjoint union of these sets is
$B$.

An ungraded isomorphism $f$ transports the grading by inverse images:
\[
 \mM_B=\bigoplus_g f^{-1}(N^g).
\]
Bimodule linearity gives $f(hmk)=hf(m)k$, so the transported components obey
the grading rule.  For uniqueness, the classification records the full type
$(C,\lambda)$, not merely $E_\theta$.  Thus different double cosets still
give nonisomorphic strict gradings when the extension sets coincide.\par\smallskip}

\begin{corollary}
\label{cor:Fourier-support}
Let $\Sset_{\H,\K}(\G)=\{\Sset_C:C\in \H\backslash \G/\K\}$.  If $B\subseteq\hQ$ admits a multiplicity-free grading as in
\cref{thm:exact-cover}, then
\[
 \operatorname{supp}\Fourier(\mathbf 1_B)
 \subseteq
 \bigcup_{S\in\Sset_{\H,\K}(\G)}S.
\]
\end{corollary}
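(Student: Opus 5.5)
By \cref{thm:exact-cover}, a multiplicity-free grading on $\mM_B$ yields a finite set of types $\Theta$ with $B=\dot\bigcup_{\theta\in\Theta}E_\theta$. Because the union is disjoint, the characteristic functions add with no overlap:
\[
 \mathbf 1_B=\sum_{\theta\in\Theta}\mathbf 1_{E_\theta},
\]
where each $\theta=(C,\lambda)$ contributes $\mathbf 1_{E(\Sset_C,\lambda)}$. The Fourier transform $\Fourier$ is $\R$-linear, so we apply it term by term.

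For each summand, \cref{prop:Fourier-extensions} gives
\[
 \Fourier(\mathbf 1_{E(\Sset_C,\lambda)})=[\Q:\Sset_C]\,\lambda\,\mathbf 1_{\Sset_C},
\]
which is supported inside $\Sset_C$. Summing over $\Theta$ then gives
\[
 \operatorname{supp}\Fourier(\mathbf 1_B)\subseteq\bigcup_{\theta=(C,\lambda)\in\Theta}\Sset_C\subseteq\bigcup_{S\in\Sset_{\H,\K}(\G)}S.
\]
The second inclusion holds because every $\Sset_C$ with $C$ occurring in $\Theta$ belongs to $\Sset_{\H,\K}(\G)$.

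An alternative route, with the same content, uses \eqref{eq:VS-sum}. Each $\mathbf 1_{E_\theta}$ lies in $\mV_{\Sset_C}$, so $\mathbf 1_B$ lies in $\sum_{S}\mV_S$, where $S$ ranges over the finitely many stabilizers of the occurring double cosets. By \eqref{eq:VS-sum}, the Fourier transform of this sum consists of functions supported on the union of those $S$.

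No step here is a real obstacle. The only point to check is that the sum expressing $\mathbf 1_B$ is finite, so that linearity applies. This holds because $\Theta$ is finite by \cref{thm:exact-cover}, and $\hQ$ is finite anyway.
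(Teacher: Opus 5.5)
Your proof is correct and is essentially the paper's argument. The paper writes $\mathbf 1_B\in\sum_{S}\mV_S$ using the exact cover \eqref{eq:exact-cover} and then applies \eqref{eq:VS-sum}, which is your alternative route; your main route uses only the easy inclusion supplied termwise by \cref{prop:Fourier-extensions}, and that inclusion is all the statement needs.
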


\begin{proof}
Since $\Q$ is finite, it has only finitely many subgroups, so
$\Sset_{\H,\K}(\G)$ is finite even when $\G$ is infinite.
Equation \eqref{eq:exact-cover} gives
$\mathbf 1_B\in\sum_{S\in\Sset_{\H,\K}(\G)}\mV_S$.
Apply \cref{thm:Fourier-VS}.
\end{proof}

\draft{\par\smallskip\noindent\textbf{\Cref{cor:Fourier-support}.}
The exact cover gives the pointwise identity
$\mathbf1_B=\sum_{\theta\in\Theta}\mathbf1_{E_\theta}$.  Each summand lies
in $\mV_{\Sset_C}$.  \Cref{thm:Fourier-VS} says that the Fourier
transform is supported on the union of those stabilizers, which is contained
in the union of all stabilizers for the pair $(\H,\K)$.\par\smallskip}

\begin{proposition}
\label{prop:two-subgroups}
Let $S,T\leq \Q$, put $U=S\cap T$, and let
$\Lambda\subseteq\widehat S$ and $\Mset\subseteq\widehat T$.  Suppose
\[
 B=\mathop{\dot\bigcup}_{\lambda\in\Lambda}E(S,\lambda)
  =\mathop{\dot\bigcup}_{\mu\in\Mset}E(T,\mu)
\]
for some $B\subseteq\hQ$.  Then, for every $\nu\in\widehat U$, either
$B\cap E(U,\nu)=\varnothing$ or $B\cap E(U,\nu)=E(U,\nu)$.
In the second case, $\Lambda$ contains every extension of $\nu$ to $S$ and
$\Mset$ contains every extension of $\nu$ to $T$.  Thus, on each
$E(U,\nu)$, the two decompositions are respectively
\begin{equation}
 \mathop{\dot\bigcup}_{\substack{\lambda\in\widehat S\\\lambda|_U=\nu}}E(S,\lambda)\quad\text{and}\quad
 \mathop{\dot\bigcup}_{\substack{\mu\in\widehat T\\\mu|_U=\nu}}E(T,\mu).
 \label{eq:two-subgroups-decompositions}
\end{equation}
\end{proposition}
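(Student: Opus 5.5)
The plan is a pointwise argument on the fibers of restriction to $U$, using \cref{prop:extension-partitions}.

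Fix $\nu\in\widehat U$. By \cref{prop:extension-partitions}, $E(U,\nu)$ is the disjoint union of the sets $E(S,\lambda)$ with $\lambda|_U=\nu$. It is also the disjoint union of the sets $E(T,\mu)$ with $\mu|_U=\nu$. For any $\lambda\in\widehat S$, the set $E(S,\lambda)$ lies inside $E(U,\lambda|_U)$. Hence
\[
 B\cap E(U,\nu)=\mathop{\dot\bigcup}_{\lambda\in\Lambda,\ \lambda|_U=\nu}E(S,\lambda)
 =\mathop{\dot\bigcup}_{\mu\in\Mset,\ \mu|_U=\nu}E(T,\mu).
\]
So it suffices to show the following. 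If some $\lambda\in\Lambda$ with $\lambda|_U=\nu$ occurs, then every $\mu\in\widehat T$ with $\mu|_U=\nu$ lies in $\Mset$. Symmetrically, if some such $\mu$ occurs, every extension of $\nu$ to $S$ lies in $\Lambda$. These two statements together give the dichotomy.

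For the key step, take $\lambda\in\Lambda$ with $\lambda|_U=\nu$, and take any $\mu\in\widehat T$ with $\mu|_U=\nu$. By \cref{prop:extensions-intersection}, $E(S,\lambda)\cap E(T,\mu)\neq\varnothing$, since the two characters agree on $U$. Choose $\xi$ in this intersection. Then $\xi\in E(S,\lambda)\subseteq B$. Since $B$ is the disjoint union of the sets $E(T,\mu')$ for $\mu'\in\Mset$, we get $\xi\in E(T,\mu')$ for some $\mu'\in\Mset$. But $\xi\in E(T,\mu')$ forces $\mu'=\xi|_T=\mu$, so $\mu\in\Mset$. The same argument with $S$ and $T$ interchanged shows that every extension of $\nu$ to $S$ lies in $\Lambda$.

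Now the conclusion follows. If $B\cap E(U,\nu)\neq\varnothing$, some $\lambda\in\Lambda$ or some $\mu\in\Mset$ restricts to $\nu$. Applying the key step twice, $\Lambda$ contains all extensions of $\nu$ to $S$ and $\Mset$ contains all extensions of $\nu$ to $T$. By \cref{prop:extension-partitions}, $B\cap E(U,\nu)$ is then all of $E(U,\nu)$, decomposed as in \eqref{eq:two-subgroups-decompositions}. The only point that needs care is the nonemptiness of the intersection, which is exactly \cref{prop:extensions-intersection}. Otherwise the argument is bookkeeping that uses the fact that each $\xi$ determines its restrictions uniquely.
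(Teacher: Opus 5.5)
Your proof is correct and follows essentially the same route as the paper: \cref{prop:extensions-intersection} gives a point of $E(S,\lambda)\cap E(T,\mu)$, which forces every extension of $\nu$ to $T$ into $\Mset$ (and symmetrically every extension to $S$ into $\Lambda$), and \cref{prop:extension-partitions} then fills out $E(U,\nu)$. One small point: your displayed identity already shows that if $B\cap E(U,\nu)\neq\varnothing$, there is both some $\lambda\in\Lambda$ and some $\mu\in\Mset$ above $\nu$, so the ``or'' in your last paragraph can be ``and''; this is exactly what the second application of the key step needs.
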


\begin{proof}
Fix $\nu\in\widehat U$.  If no $\lambda\in\Lambda$ restricts to
$\nu$, then the first equality for $B$ and \cref{prop:extension-partitions}
give $B\cap E(U,\nu)=\varnothing$.

Suppose instead that $\lambda\in\Lambda$ and $\lambda|_U=\nu$.  Let
$\mu$ be any extension of $\nu$ to $T$.  By
\cref{prop:extensions-intersection},
$E(S,\lambda)\cap E(T,\mu)\neq\varnothing$.  Since
$E(S,\lambda)\subseteq B$ and the sets $E(T,\mu')$ form a partition of
$\hQ$, the second expression for $B$ forces $\mu\in\Mset$.  Thus every
extension of $\nu$ to $T$ belongs to $\Mset$.  Reversing the roles of $S$
and $T$ shows that every extension of $\nu$ to $S$ belongs to $\Lambda$.
Now \cref{prop:extension-partitions} gives
$B\cap E(U,\nu)=E(U,\nu)$ and the two decompositions in
\eqref{eq:two-subgroups-decompositions}.
\end{proof}

\draft{\par\smallskip\noindent\textbf{\Cref{prop:two-subgroups}, the forcing step.}
The full families $E(S,\lambda)$ and $E(T,\mu)$ each partition $\hQ$.
Suppose a selected $S$-part above $\nu$ occurs, and fix any extension $\mu$
of $\nu$ to $T$.  \Cref{prop:extensions-intersection} supplies
\[
 \xi\in E(S,\lambda)\cap E(T,\mu).
\]
The first membership puts $\xi$ in $B$.  In the full $T$-partition, the
unique part containing $\xi$ is $E(T,\mu)$.  Since $B$ is a union of selected
$T$-parts, this part must be selected.  Thus every extension of $\nu$ to $T$
belongs to $\Mset$.  Reversing $S$ and $T$ selects every extension to $S$.
\Cref{prop:extension-partitions} then makes both selected unions all of
$E(U,\nu)$.  This proves the all-or-nothing statement without invoking any
unstated coset-partition result.\par\smallskip}

\begin{remark}
\label{rem:coset-partitions}
Each $E(S,\lambda)$ is a coset of $S^\perp\leq\hQ$.  Hence a
multiplicity-free decomposition for which every Peirce component is nonzero
gives a coset partition of $\hQ$.  Akman and Sissokho \cite[Theorems~2 and~3]{AkmanSissokho2025}
treat partitions involving two subgroups and three mutually commuting
subgroups, respectively, and exhibit nonstandard examples involving four
subgroups.  \Cref{prop:two-subgroups} applies more generally to a
subset $B\subseteq\hQ$, with the subgroups $S^\perp$ arising from the
stabilizers considered above.
\end{remark}

\subsection{A nonabelian example}

\begin{example}
\label{ex:S4-V4}
Let $\F$ be a splitting field of characteristic different from $2$, let
$\G=S_4$ and $\H=\K=V_4$, where $V_4$ is the Klein four subgroup of double transpositions.  Conjugation
by $S_4$ permutes the three nonidentity elements of $V_4$.  Its kernel is
$V_4$, so
\[
 S_4/V_4\cong\Aut(V_4)\cong S_3.
\]
Thus the six double cosets $\H\backslash \G/\H$ may be indexed by
$\sigma\in S_3$.  If $g_\sigma$ induces the automorphism $\sigma$ of $V_4$,
then
\[
 \Sset_\sigma
 =\{(\sigma(k^{-1}),k):k\in V_4\}
 =\{(\sigma(k),k^{-1}):k\in V_4\}
 \leq V_4\times V_4
\]
is the graph of $\sigma$ and has order $4$.

For $\sigma\neq\tau$,
\begin{equation}
 |\Sset_\sigma\cap \Sset_\tau|
 =\begin{cases}
   2,&\sigma^{-1}\tau\text{ is a transposition},\\
   1,&\sigma^{-1}\tau\text{ is a $3$-cycle}.
  \end{cases}
 \label{eq:S4-stabilizer-intersections}
\end{equation}
Indeed, an element belongs to both stabilizers exactly when it has the form
$(\sigma(k),k^{-1})$ with $\sigma(k)=\tau(k)$.  Thus the intersection is
parametrized by the fixed points of $\sigma^{-1}\tau$ on $V_4$.  Since $|\Q|=16$ and $|\Sset_\sigma|=4$, every set
$E(\Sset_\sigma,\lambda)$ has four elements.

By \cref{cor:type-compatibility}, types based on two distinct stabilizers can
occur in the same multiplicity-free decomposition only when the stabilizers
intersect in order $2$.  Join two of the six stabilizers when this happens.
By \eqref{eq:S4-stabilizer-intersections}, the resulting graph is the Cayley
graph of $S_3$ with respect to its three transpositions.  Right multiplication
by a transposition joins each even permutation to each odd permutation, so this
graph is $K_{3,3}$.  In particular it has nine edges and no triangles.  The
stabilizers used in a multiplicity-free decomposition must be pairwise adjacent
by \cref{cor:type-compatibility}, hence form a clique.  Therefore a decomposition
for which every Peirce component is nonzero uses at most two distinct
stabilizers.

For each stabilizer $S$, the four sets $E(S,\lambda)$,
$\lambda\in\widehat S$, give one partition of $\hQ$.  This gives six
partitions using one stabilizer.  Now let
$\{\Sset_\sigma,\Sset_\tau\}$ be an edge and put
$U=\Sset_\sigma\cap\Sset_\tau$.  Since $|U|=2$, the two sets
$E(U,\nu)$, $\nu\in\widehat U$, partition $\hQ$.  By
\cref{prop:extension-partitions}, each $E(U,\nu)$ has the two decompositions
obtained from the extensions of $\nu$ to $\Sset_\sigma$ and to
$\Sset_\tau$.  Moreover, by \cref{prop:extensions-intersection}, every
$\Sset_\sigma$-part in the first decomposition meets every
$\Sset_\tau$-part in the second.  Hence a disjoint cover of $E(U,\nu)$
using parts from these two stabilizers cannot mix the two decompositions: it
must use all extensions to one stabilizer or all extensions to the other.
Thus the two values of $\nu$ give four partitions of $\hQ$.  Two use only one
stabilizer and two use both.  Therefore the nine edges give eighteen
additional partitions, and the total number is $6+2\cdot9=24$.
By \cref{thm:exact-cover}, these give $24$ pairwise nonisomorphic
multiplicity-free gradings on the same underlying bimodule $\mM_{\hQ}$.
\draft{\par\medskip\noindent\textbf{The $S_4,V_4$ enumeration in full detail.}
\begin{enumerate}[label=\textup{(\arabic*)}]
\item The subgroup
$V_4=\{1,(12)(34),(13)(24),(14)(23)\}$ is normal because conjugation only
relabels the four letters.  Conjugation permutes its three nonidentity
elements, giving a homomorphism
\[
 S_4\longrightarrow S_3\cong\Aut(V_4).
\]
The element
$(12)$ fixes $(12)(34)$ and swaps the other two nonidentity elements, so its
image is a transposition.  The element $(123)$ acts as the $3$-cycle computed
in the $A_4$ example.  These two images generate $S_3$, so the action is
surjective.  Its kernel has order $24/6=4$ and contains the abelian subgroup
$V_4$.  Hence the kernel is exactly $V_4$.  Since $V_4$ is normal, each double
coset is a single coset, and there are six.

\item If $g_\sigma$ induces $\sigma$, the equation $hg_\sigma k=g_\sigma$
forces
$h=g_\sigma k^{-1}g_\sigma^{-1}=\sigma(k^{-1})$.  Conversely this value of
$h$ works.  Thus the stabilizer is the graph displayed in the black text above.  Its
second projection is bijective, so it has four elements.

\item A pair $(x,k^{-1})$ lies in both graph stabilizers exactly when
$\sigma(k)=\tau(k)$.  Thus $k$ is fixed by $\sigma^{-1}\tau$.
Regard $V_4$ as the two-dimensional vector space over
$\mathbb F_2$.  A transposition in $S_3$ fixes the identity and one
nonidentity element, hence two elements total.  A $3$-cycle fixes only the
identity.  This proves \eqref{eq:S4-stabilizer-intersections}.

\item The graph joining pairs with intersection order two is the Cayley graph
of $S_3$ with its transpositions.  Multiplication by an odd permutation
changes parity.  From any even permutation, the three transpositions reach
all three odd permutations, so the graph is $K_{3,3}$.  It has
$3\cdot3=9$ edges and no triangle.  Distinct stabilizers appearing in one
multiplicity-free cover must be pairwise joined.  Hence at most two occur.

\item For one fixed stabilizer $S$, restriction
$\hQ\to\widehat S$ partitions $\hQ$ into the four fibers
$E(S,\lambda)$.  This gives one pure cover for each of the six stabilizers.

\item Fix an edge $\{S,T\}$ and put $U=S\cap T$.  Since $|U|=2$, there are
two characters $\nu$ of $U$.  For each $\nu$, exactly two characters of $S$
and two of $T$ extend it.  Their extension sets give two different
two-part decompositions of $E(U,\nu)$.

Every $S$-part meets every $T$-part because their defining characters agree
on $U$.  Therefore a disjoint cover of $E(U,\nu)$ cannot select a part from
both decompositions: such parts would overlap.  It must select both $S$-parts
or both $T$-parts.  The two values of $\nu$ can be assigned independently,
giving four covers: two pure covers and two mixed covers.  The pure covers
were already counted.  Thus each edge contributes exactly two new covers.

\item Different edges give different mixed covers because the two
stabilizers used by a mixed cover recover that edge.  Hence the number is
$6+9\cdot2=24$.  \Cref{thm:exact-cover} converts these distinct sets
of types into pairwise nonisomorphic strict graded structures on the same
ungraded bimodule.
\end{enumerate}\par\medskip}
\end{example}

\section{Further questions}
\label{sec:questions}

We ask three questions suggested by the classification.

\begin{enumerate}[label=\textup{(\arabic*)}]
\item \emph{Pointed Hopf algebras.}  For suitable pointed Hopf algebras in
place of $\RH$ and $\RK$, with compatible Hopf coactions replacing group
gradings, is there an analogue of the orbit and stabilizer classification
of \cref{thm:classification}?  In particular, what data replace double
cosets, stabilizer characters, and coefficient modules, and under which
hypotheses can one recover an explicit Peirce decomposition?  Any such
extension must account for the non-group-like structure of pointed Hopf
algebras without relying on semisimplicity, which is not assumed even for the
group algebras treated here.

\item \emph{Smash products.}  Using the smash-product framework in
\cite{CM84}, can the classification and Peirce formulas
be expressed in terms of smash products?  The relationship between graded
modules and smash-product modules is known.  Can the double-coset types, their coefficient modules,
and the multiplicity-free compatibility conditions be recognized
intrinsically in a smash-product model, and can this perspective clarify
composition of graded bimodules?

\item \emph{Categories and composition.}  The category of ordinary
$(\RH,\RK)$-bimodules is the category of left
$\R[\H\times\K^{\mathrm{op}}]$-modules.  The category considered here
additionally carries a grading by the $\Q$-set $\G$.  The classification
already identifies this graded category, up to the choice of double-coset
representatives, with a product of categories of stabilizer modules.
The relative tensor product of two atomic bimodules is computed in
\cite[Section~7.2]{HindiSchutzer2026Incidence}.  Can this formula be extended
to arbitrary coefficient modules and expressed through these equivalences
compatibly with associativity when varying the intermediate subgroup?
Can the Peirce and multiplicity-free conditions
be expressed categorically without choosing representatives or characters?
\end{enumerate}

\draft{\clearpage
\noindent\textbf{Final verification checklist for the whole paper.}
The following dependency order lets the reader certify the manuscript without
using a later result prematurely.
\begin{enumerate}[label=\textup{(\arabic*)}]
\item \Cref{lem:stabilizer-basic} uses only the action and abelianness of
$\H$ and $\K$.
\item \Cref{lem:split-stabilizers} uses only the standing splitting and
ring hypotheses.  The Fourier-idempotent discussion then uses that lemma.
\item \Cref{lem:orbit-induction} uses only the group action and tensor
balancing.  \Cref{thm:classification} uses this lemma and the Fourier
idempotents, and the finite-generation and projectivity corollaries use the
same theorem.
\item In \cref{sec:YD} the coefficient ring is a field.  \Cref{lem:YD-orbit} uses
the Yetter-Drinfel'd condition.  \Cref{thm:YD-essential-image} uses that lemma,
the equivalence in \cref{thm:classification} on one orbit, and the character restriction
calculation.  No Peirce or multiplicity-free result is used there.
\item \Cref{prop:Peirce-induced} uses the induced construction and
orthogonality.  \Cref{thm:Fourier-Peirce-formula} then uses
\cref{thm:classification}.  The extension-set and Fourier results use only the split
character theory already established.
\item \Cref{thm:multiplicity-free} uses \cref{thm:Fourier-Peirce-formula} and
indecomposability.  \Cref{thm:exact-cover} uses it and
\cref{thm:classification}.  \Cref{prop:two-subgroups} uses only the two extension-set
propositions.  The final example then uses these results in that order.
\end{enumerate}

For edge cases, check the following explicit outcomes.
\begin{itemize}
\item The zero bimodule corresponds to all coefficient modules zero and, in
\cref{thm:multiplicity-free}, to the empty set of types.
\item If a stabilizer is trivial, it has one character and every character of
$\Q$ extends it.  The formulas give $E(1,1)=\hQ$.
\item If $\H=1$ or $\K=1$, the same action and stabilizer formulas remain
valid.  If $\G$ is infinite, only the set of double cosets can be infinite.
The group $\Q$, all stabilizers, and all character sets remain finite.
\item Arbitrary coefficient modules are allowed in the classification.
Finite generation, projectivity, freeness, and finite-dimensionality are used
only where they are explicitly stated.
\item The integer multiplicities in \cref{sec:Fourier-Peirce} are not
identified with elements of $\R$ until the map $\mathbb Z\to\R$ is written.
\item Every occurrence of ``isomorphic'' in the classification of gradings
means strict graded isomorphism.  The only deliberately ungraded comparison is
\cref{cor:same-Peirce} and the underlying-bimodule part of
\cref{thm:exact-cover}.
\end{itemize}

The external facts left to standard references are the following.  An element of a
commutative ring is a unit precisely when it lies in no maximal ideal, a
finitely generated projective module over a local ring is free, a projective
quotient splits its defining short exact sequence, direct summands of
projectives are projective, tensor product commutes with direct sums, the
normal form theorem for free products, and the elementary subgroup facts
about $A_4$, $S_4$, and $V_4$ used in the examples.  Every argument specific
to this paper is expanded above.\par\medskip}

\bibliographystyle{abbrv}
\bibliography{bimodules}

@article{NRV1990,
 author = {N{\u{a}}st{\u{a}}sescu, C. and Raianu, S. and Van Oystaeyen, F.},
 title = {Modules graded by {G}-sets},
 fjournal = {Mathematische Zeitschrift},
 journal = {Math. Z.},
 issn = {0025-5874},
 volume = {203},
 number = {4},
 pages = {605--627},
 year = {1990},
 language = {English},
 doi = {10.1007/BF02570759},
 zbMATH = {4188027},
 Zbl = {0721.16025}
}

@article{SSY2020,
 author = {Santulo, jun., Ednei A. and Souza, Jonathan P. and Yasumura, Felipe Y.},
 title = {Group gradings on finite dimensional incidence algebras},
 fjournal = {Journal of Algebra},
 journal = {J. Algebra},
 issn = {0021-8693},
 volume = {544},
 pages = {302--328},
 year = {2020},
 language = {English},
 doi = {10.1016/j.jalgebra.2019.10.026},
 zbMATH = {7128761},
 Zbl = {1436.16051}
}

@article{DSY2025,
 author = {Dos Santos, Helen Samara and Yasumura, Felipe Yukihide},
 title = {Group gradings on finite-dimensional incidence algebras. {II}},
 fjournal = {Linear Algebra and its Applications},
 journal = {Linear Algebra Appl.},
 issn = {0024-3795},
 volume = {726},
 pages = {273--290},
 year = {2025},
 language = {English},
 doi = {10.1016/j.laa.2025.07.027},
 zbMATH = {8091361}
}

@article{deFrancaSviridova2024,
 author = {de Fran{\c{c}}a, Antonio and Sviridova, Irina},
 title = {Irreducible graded bimodules over algebras and a {Pierce} decomposition of the {Jacobson} radical},
 fjournal = {Communications in Algebra},
 journal = {Commun. Algebra},
 issn = {0092-7872},
 volume = {52},
 number = {10},
 pages = {4226--4254},
 year = {2024},
 language = {English},
 doi = {10.1080/00927872.2024.2343774},
 zbMATH = {7909486},
 Zbl = {1562.16002}
}

@article{AkmanSissokho2025,
 author = {Akman, Fusun and Sissokho, Papa A.},
 title = {Transversal coset partitions of groups},
 fjournal = {Beitr{\"a}ge zur Algebra und Geometrie},
 journal = {Beitr. Algebra Geom.},
 issn = {0138-4821},
 volume = {66},
 number = {2},
 pages = {417--441},
 year = {2025},
 language = {English},
 doi = {10.1007/s13366-024-00748-9},
 zbMATH = {8043095},
 Zbl = {1565.20061}
}

@misc{HindiSchutzer2026Incidence,
      title={A classification of group gradings on incidence algebras over commutative rings}, 
      author={Felipe de Mattos Chafik Hindi and Waldeck Schutzer},
      year={2026},
      eprint={2609.21184},
      note={\href{https://arxiv.org/abs/2609.21184}{arXiv:2609.21184}},
      archivePrefix={arXiv},
      primaryClass={math.RA},
      url={https://arxiv.org/abs/2609.21184}, 
}

@article{CM84,
 author = {Cohen, M. and Montgomery, S.},
 title = {Group-graded rings, smash products, and group actions},
 fjournal = {Transactions of the American Mathematical Society},
 journal = {Trans. Am. Math. Soc.},
 issn = {0002-9947},
 volume = {282},
 pages = {237--258},
 year = {1984},
 language = {English},
 doi = {10.2307/1999586},
 zbMATH = {3845773},
 Zbl = {0533.16001}
}

@book {Polcino-Sehgal,
    AUTHOR = {Polcino Milies, C\'{e}sar and Sehgal, Sudarshan K.},
     TITLE = {An introduction to group rings},
    SERIES = {Algebra and Applications},
    VOLUME = {1},
 PUBLISHER = {Kluwer Academic Publishers, Dordrecht},
      YEAR = {2002},
     PAGES = {xii+371},
      ISBN = {1-4020-0238-6},
   MRCLASS = {16S34 (16-01 16U60)},
  MRNUMBER = {1896125},
MRREVIEWER = {Edgar G. Goodaire},
       DOI = {10.1007/978-94-010-0405-3},
       URL = {https://doi.org/10.1007/978-94-010-0405-3},
}

\end{document}